\newtheorem{theorem}{Theorem} 
\newtheorem{proposition}{Proposition}
\newtheorem{lemma}{Lemma}
\theoremstyle{remark}
\newtheorem{remark}{Remark}
\newcommand{\set}[2]{\left\{ #1 \,;\, #2\right\}}
\newcommand{\Z}{\mathbb{Z}}
\newcommand{\R}{\mathbb{R}}
\newcommand{\C}{\mathbb{C}}
\begin{document}

\title[Cohomological Laplace transform]{%
Cohomological Laplace transform on non-convex cones and Hardy spaces of $\bar{\partial}$-cohomology on
non-convex tube domains} 
\thanks{The second author was supported by JSPS KAKENHI Grant Number 16K05174.}

\dedicatory{Memory of Gennadi Henkin}

\author[S. Gindikin]{Simon Gindikin}
\address[S. Gindikin]{%
Department of Mathematics, Hill Center, Rutgers University, 110 Frelinghysen Road, Piscataway,
NJ 08854-8019, USA}
\email{gindikin@math.rutgers.edu} 

\author[H. Ishi]{Hideyuki Ishi}
\address[H. Ishi]{%
Graduate School of Mathematics, Nagoya University, Furo-cho, Chikusa-ku, Nagoya 464-8602, Japan}
\email{hideyuki@math.nagoya-u.ac.jp}
\keywords{Non-convex cone;  Laplace transform;  Paley-Wiener Theorem;  Symmetric cone; $\bar{\partial}$-cohomology; Hardy norm.}
\subjclass[2000]{32F10, 32C35, 42B30}

\begin{abstract}
We consider a class of non-convex cones $V$ in $\R^n$
 which can be presented as (not unique) union of convex cones of some codimension $q$
 which we call the index of non-convexity. 
This class contains non-convex symmetric homogeneous cones studied in \cite{DG93} and \cite{FG96}.
For these cones we consider a construction of dual non-convex cones $V^*$
 and corresponding non-convex tubes $T$
 and define a cohomological Laplace transform from functions at $V$ to $q$-dimensional cohomology of $T$
 using the language of smoothly parameterized \u{C}ech cohomology. 
We give a construction of Hardy space of $q$-dimensional cohomolgy at $T$.
\end{abstract}

\maketitle

\section{Introduction}
The classical Laplace transform can be considered
 as a specification of Fourier transform of functions (distributions)
 with supports on the positive half-line $\R_{>0}$
 as boundary values of holomorphic functions {on} the half-plane {in $\C$}. 
There are many known Paley-Wiener type theorems
 describing spaces of holomorphic functions on the half plane
 as dual spaces of functions (distributions) on the half-line.
Bochner gave multi-dimensional analogues of these constructions
 for convex pointed cones
 (without lines inside),
 see \cite{BM48}, \cite{G92b} and \cite{GV92}.
More detailed constructions of function spaces are possible for convex homogeneous cones 
 (relative to the group of linear automorphisms),
 see \cite{G75} and \cite{RV73}.

This paper is dedicated to some generalization of these results to the case of non-convex cones.
We want to realize in non-convex case the similar logical sequence as in convex case
 but at more complicated environment.
For some non-convex cones, we define dual non-convex cones and associated non-convex tubes. 
For functions (distributions) at non-convex cones we define Laplace transforms with values in $\bar{\partial}$-cohomology at (non-convex) tubes. 
The principal requirement: boundary values of this cohomology must coincide with Fourier transform of initial functions (distributions). 

Of course,
 we do not consider arbitrary non-convex cones
 but suppose some regularity of their geometry.
We want that, in a natural {sense}, these cones can be {built}
 from convex cones of a fixed codimension $q$  which we call \textit{the index of non-convexity}.
Let us have a cone $V$ in $\R^n$ with a connected boundary.
 \textit{The slices} of $V$ are the pointed convex cones of minimal codimension $q$
 that are connected components of intersections of $V$ with subspaces.
We consider slices with orientations.
One subspace can produce several slices.
Apparently the parameter space of slices can be considered as a set which is branching over Grassmannian
 of $(n-q)$-subspaces. 

If we have a cycle of slices,
 then the oriented union of these slices is proportional to $V$
 with an \textit{integer} coefficient
 which can be zero. 
A cycle is called \textit{generating}
 if its dimension is $q$
 and the coefficient of the union is non-zero.
Our basic restriction on $V$ is the existence of a generating cycle.

In good cases,
 slices at the cycle can be taken without intersections
 but they do not need to be such ones
 since, as a result of the orientation,
 there is a compensation in the union of the slices.
The union of slices from a cycle is proportional to $V$ with a coefficient which can be zero.
Non-convex cones can be homogeneous and symmetric (pseudo-Riemannian ones).
This paper prepares a work with such cones and associated pseudo-Hermitian symmetric spaces.
Let us mention two examples:
 the complement to the light cone (more generally, cones restricted by quadrics),
 and the cone of square {matrices} with the positive determinant. 

For functions on $V$,
 as a replacement of the Laplace transform for non-convex case,
 we suggest to consider the Laplace transforms of restrictions of the function to slices.
For each slice we consider dual cone. 
Since the slices have the codimension $q$,
 the dual cones, which are called \textit{wedges}, will be direct products of a pointed cone and $\R^{{q}}$.
We consider their union $V^*$ as the dual cone to $V$,
 and wedges as its maximal convex parts.
So slices and wedges are dual convex objects which we use for the work with non-convex cones.
It is remarkable that the homogeneous cones mentioned above are self-dual $V = V^*$
 (as well as other symmetric cones)
 for an appropriate dot-product.
Let $T$ be the tube domain over the base $V^*$.
This tube $T$ is $q$-linearly concave (Gindikin-Henkin {\cite{GH78}}).
The set of tubes for wedges is a covering of $T$ by Stein manifolds.

Technically we do not work with the set of all slices (which are usually difficult to describe)
 but with any sufficient set of slices
 that contains a generating cycle of slices. 
For many cones,
 including homogeneous ones,
 it is easy to see that
 $V^*$ is independent of a choice of such subsets of slices.
Probably,
 it is true for a broad class of non-convex cones,
 but we do not consider this natural and interesting problem,
 as well as some other natural geometric questions about non-convex cones 
 such as, for example, whether $(V^*)^* = V$.
Our aim is to develop a minimal geometrical background
 sufficient for the construction of the cohomological Laplace transform
 in non-convex cones 
 satisfying the connection stated above with the Fourier transform. 

Self-dual cones are pseudo-Riemannian symmetric spaces.
Classical symmetric cones (with classical groups of automorphisms) were classified by \cite{DG93}.
We discussed two of them in this paper.
Arbitrary non-convex symmetric cones and their connections with Jordan algebras were investigated in \cite{FG96}.

The object which we build
 --- a family of holomorphic functions in different tubes for wedges---
 may look unforeseeable. 
It turns out that
 strong connections between these functions can be expressed in a standard language.
So we have a non-convex tube which is covered by convex tubes (so Stein manifolds)
 and holomorphic functions on them.
These functions satisfy a remarkable system of differential equations
 which appeared at the integral geometry and goes back to F. John
 (see \cite{G92}).
This system of functions can be interpreted as \u{C}ech cocycle for this covering.
Usual \u{C}ech conditions are oriented on coverings with discrete parameters.
However, in our case
 the parameter set is a smooth manifold
 and it is natural to replace the equations at differences of \u{C}ech by differential equations.
Moreover,
 we can transform this object to a closed $q$-form on the parameter space.
For each value of the parameter,
 the $q$-form depends holomorphically on points in the corresponding tube of the covering.
It is the reason why we call this construction the cohomological Laplace transform.
Our cocycles have a special form since functions on them are constants along maximal planes at tubes of the covering.
Such {cocycles} cannot be coboundaries and as a result we have the operator not at a cocycle but at the cohomology.  
This construction of cohomological Laplace transform
 as well as the cohomological language in connection with smoothly parameterized \u{C}ech coverings 
 goes back to \cite{G92}, see also \cite{BEG05}.

Let us remark that $\R^n$ is the joint boundary of all tubes of the covering.
If we consider functions from such a {function} space that we can define boundary values of the Laplace transform at covering tubes{,}
 then we have a cohomology on $\R^n$ and the integral over a generating cycle gives the Fourier transform of original functions.
So the Fourier transform can be interpreted as boundary values of the cohomological Laplace transform 
 ({in} a spirit of hyperfunctions).
This construction has strong conceptual connections with integral geometry, specifically with the Radon-John transform.
{In} particular, the cohomological Laplace transform is very {close} to the Radon-John transform of the Fourier transform.
Finally, we show how to transfer some Paley-Wiener theorems from convex to non-convex case.

{This paper is dedicated to the memory of the outstanding mathematician Gennadi Henkin.
He was a close friend and collaborator of one of authors (S.G.).
They discussed many times problems which are considered at this paper.}  
%
%
%
%
\section{Preliminary: Convex case}
\subsection{Convex cones and tubes}
Let $v$ be a pointed {open} convex cone
 in the vector space $\R^{\ell}_{\tau}$ of $\ell$-dimensional row vectors $\tau$.
Let $v^*$ be the dual cone of $v$ realized in the space $\R^{\ell}_{r}$ of $\ell$-dimensional column vectors $r$,
 where the coupling of $\tau \in \R^{\ell}_{\tau}$ and $r \in \R^{\ell}_r$ is given by the matrix product
 $\tau r = \tau_1 r_1 + \dots + \tau_{\ell} r_{\ell}$.
Namely,
 we define
$$
 v^* := \set{r \in \R^{\ell}_r}{\tau r>0 \quad (\forall \tau \in \overline{v} \setminus \{0\})}. 
$$
Let $t = t(v^*)$ be the tube domain
 $\R^{\ell}_{r} + i v^* \subset \C^{\ell}_p$.
Elements of $t$ are $\ell$-dimensional column complex vectors $p = r + i s$ with $r \in \R^{\ell}_r$ and $s \in v^*$.

\subsection{Function spaces and Paley-Wiener theorems}
Let $\mathcal{S}(\R^{\ell}_{\tau})$ denote the space of Schwartz functions on $\R^{\ell}_{\tau}$,
 and $\mathcal{S}'(\R^{\ell}_{\tau})$ the dual space of $\mathcal{S}(\R^{\ell}_{\tau})$, that is, 
 the space of tempered distributions on $\R^{\ell}_{\tau}$.
Let $\mathcal{S}(v)$ (resp. $L^2(v)$, and $\mathcal{S}'(v)$)
 be the subspace of 
 $\mathcal{S}(\R^{\ell}_{\tau})$ (resp. $L^2(\R^{\ell}_{\tau})$ and $\mathcal{S}'(\R^{\ell}_{\tau})$)
 consisting of functions (or distributions)
 whose support is contained in the closure $\overline{v}$ of the cone $v \subset \R^{\ell}_{\tau}$.
Clearly we have $\mathcal{S}(v) \subset L^2(v) \subset \mathcal{S}'(v)$.
We consider the Laplace transform for these function spaces defined by
 \begin{equation} \label{eqn:def_of_Laplace}
 \mathcal{L} \phi (p) := \int_v e^{i \tau p} \phi(\tau)\,d\tau_1 \wedge \dots \wedge d\tau_{\ell} \qquad (p \in t),
\end{equation}
 where $\phi$ belongs to $\mathcal{S}(v),\,\,L^2(v)$, or $\mathcal{S}'(v)$.
When $\phi \in \mathcal{S}'(v)$,
 the integral should be interpreted in the distribution sense,
 whereas $\mathcal{L} \phi$ is a holomorphic function on the tube domain $t$ eventually.

Let $\mathcal{O}(t)$ denote the space of holomorphic functions on $t$.
We shall introduce some subspaces of $\mathcal{O}(t)$,
 which will turn out to be the Laplace images of function spaces over the cone $v$.  
Let $\mathcal{S}(t)$ be the space of $f \in \mathcal{O}(t)$
 satisfying
\begin{equation} \label{eqn:norm_on_t}
  \sup_{r+i s \in t,\, |\beta| \le b}
   (1 + |r + i s|)^a \Bigl| \Bigl(\frac{\partial}{\partial r}\Bigr)^{\beta}f(r + i s) \Bigr| < +\infty
\end{equation}
 for all $a, b \in \Z_{\ge 0}$.
Let $\mathrm{H}(t)$ be the $L^2$-Hardy space on $t$,
 that is,
 the Hilbert space of $f \in \mathcal{O}(t)$ satisfying
\begin{equation} \label{eqn:def_of_Hardynorm}
\Vert f \Vert^2_{\mathrm{H}(t)} 
 := \sup_{s \in v^*} \int_{\R^{\ell}_r} |f(r + i s)|^2 \,dr_1 \wedge \dots \wedge dr_{\ell} < +\infty.
\end{equation}
Finally,
 let $\mathcal{S}'(v)$ be the space of $f \in \mathcal{O}(t)$
 satisfying
$$ \sup_{r+i s \in \overline{t}} d(s)^a |r + i s|^b |f(r + i s)|< +\infty,$$
 for some $a, b \in \Z_{\ge 0}$, where
$$
d(s) := \begin{cases} \Vert s \Vert & \mbox{ if } \Vert s \Vert<1,\\
1 & \mbox{ if } \Vert s \Vert>1. \end{cases} 
$$
We have the following Paley-Wiener type theorem.

\begin{theorem} \label{thm:Paley-Wiener}
{\rm (i)} A function {(distribution)}
 $f \in \mathcal{S}(t)$ {\rm(}resp. $\mathrm{H}(t)$, $\mathcal{S}'(t)${\rm)}
 has a boundary value as $s \to 0$ in the sense of $\mathcal{S}(\R^{\ell}_{r})$
 {\rm(}resp. $L^2(\R^{\ell}_{r})$, $\mathcal{S}(\R^{\ell}_{r})${\rm)}.\\
{\rm (ii)} The Laplace transform gives an isomorphism from $\mathcal{S}(v)$ 
 {\rm(}resp. $L^2(v)$, $\mathcal{S}'(v)${\rm)}
 onto $\mathcal{S}(t)$ 
 {\rm(}resp. $\mathrm{H}(t)$, $\mathcal{S}'(t)${\rm)}.\\
{\rm (iii)} One has the following commutative diagrams:
$$
\xymatrix{
\mathcal{S}(v) \ar[r]^{\mathcal{L}} \ar[dr]_{\mathcal{F}} & \mathcal{S}(t) \ar[d]_{b} \\
& \mathcal{S}(\R^{\ell}_r),} \quad
\xymatrix{
L^2(v) \ar[r]^{\mathcal{L}} \ar[dr]_{\mathcal{F}} & \mathrm{H}(t) \ar[d]_{b} \\
& L^2(\R^{\ell}_r),} \quad
\xymatrix{
\mathcal{S}'(v) \ar[r]^{\mathcal{L}} \ar[dr]_{\mathcal{F}} & \mathcal{S}'(t) \ar[d]_{b} \\
& \mathcal{S}'(\R^{\ell}_r),} \quad
$$
 where $b$ stands for the map of taking the boundary value, and $\mathcal{F}$ denotes the Fourier transform.
\end{theorem}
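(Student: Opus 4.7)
The plan is to exploit the identity
$$\mathcal{L}\phi(r+is) = \mathcal{F}(e^{-\tau s}\phi)(r),$$
which connects the Laplace transform on $v$ with the Fourier transform on $\R^{\ell}_{\tau}$ weighted by the factor $e^{-\tau s}$. Since $\tau s \geq 0$ for $\tau \in \overline{v}$ and $s \in v^*$, this weight is bounded by $1$ on the support of $\phi$ and provides exponential decay at infinity in $v$, which is what makes every estimate below work. For the holomorphy of $\mathcal{L}\phi$ on $t$, this same factor dominates differentiation under the integral sign, with the usual test-function modification in the distributional case.

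I would handle the three pairs in (ii) separately but along a common skeleton: use the identity above to show $\mathcal{L}\phi$ lies in the correct space on $t$, and then invert $\mathcal{L}$ by constructing a candidate preimage from the boundary value of $f$ and showing its inverse Fourier transform is supported in $\overline{v}$. For the $L^2$ case, Plancherel gives
$$\int_{\R^{\ell}_{r}}|\mathcal{L}\phi(r+is)|^2\,dr_1 \wedge \dots \wedge dr_{\ell} = (2\pi)^{\ell}\int_v |\phi(\tau)|^2 e^{-2\tau s}\,d\tau,$$
and the right side is monotone as $s \in v^*$ moves toward the origin, with supremum proportional to $\|\phi\|^2_{L^2(v)}$. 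This proves $\mathcal{L}\phi \in \mathrm{H}(t)$ with norm preserved up to a constant, and dominated convergence in the Fourier variable $\tau$ yields $b(\mathcal{L}\phi) = \mathcal{F}\phi$ in $L^2(\R^{\ell}_{r})$. The Schwartz case follows the same template: differentiation in $r$ brings down multi-index powers $(i\tau)^{\beta}$, multiplication by $(r+is)^{\alpha}$ is converted into integration by parts in $\tau$, and the resulting integrals are uniformly bounded in $s \in v^*$ because $\phi$ is Schwartz and supported in $\overline{v}$. For $\mathcal{S}'(v)$, I would either pair against the Schwartz case by duality or estimate $\langle \phi, e^{i\tau p}\rangle$ directly using a single seminorm of $\phi$ to obtain the polynomial growth required in the definition of $\mathcal{S}'(t)$.

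The crux, and the step I expect to require the most care, is the inverse direction: given $f$ in $\mathcal{S}(t)$, $\mathrm{H}(t)$, or $\mathcal{S}'(t)$, one must produce $\phi$ on $\R^{\ell}_{\tau}$ with support in $\overline{v}$ such that $\mathcal{L}\phi = f$. The standard device is contour shifting: set $\phi := \mathcal{F}^{-1}(bf)$, and for any $\tau_0 \notin \overline{v}$ choose $s_0 \in v^*$ with $\tau_0 s_0 < 0$, which is possible because $v^{**} = \overline{v}$. Cauchy's theorem, combined with the decay built into each function space, shows that $e^{\tau s}$ times the partial Fourier transform of $f(\cdot+is)$ in the $r$-variable is independent of $s \in v^*$; specializing to $s = \lambda s_0$ and letting $\lambda \to +\infty$ forces $\phi(\tau_0) = 0$, so $\mathrm{supp}\,\phi \subset \overline{v}$. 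The technical subtleties lie in justifying the contour shifts and in the decay needed to close them, and these differ between the three spaces; the $\mathcal{S}'$ case is the most delicate, and I would reduce it to the Schwartz case by duality rather than treat it head on. Once the inverse direction is in place, the commutative diagrams in (iii) are essentially built into the construction, since the preimage is defined as $\phi = \mathcal{F}^{-1}(bf)$, which is exactly what the diagrams assert.
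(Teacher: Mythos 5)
The paper offers no proof of this theorem: it is stated as a classical Paley--Wiener/Bochner result, with the relevant references (\cite{BM48}, \cite{G92b}, \cite{GV92}) indicated in the introduction. Your sketch is the standard argument from that literature --- the identity $\mathcal{L}\phi(r+is)=\mathcal{F}(e^{-\tau s}\phi)(r)$, Plancherel plus monotone convergence for the Hardy case, integration by parts (with no boundary terms, since a Schwartz function supported in $\overline{v}$ vanishes to infinite order on $\partial v$) for the Schwartz case, and the contour-shift argument via the bipolar theorem $v^{**}=\overline{v}$ for the support of the preimage --- and it is correct in outline. The only step that needs more care than you indicate is the direct estimate in the $\mathcal{S}'$ case: $e^{i\tau p}$ is not a Schwartz function of $\tau$ on all of $\R^{\ell}_{\tau}$, so the pairing $\langle\phi,e^{i\tau p}\rangle$ must first be defined by inserting a smooth cutoff equal to $1$ on a neighbourhood of $\overline{v}$ and supported in a slightly larger cone on which $\tau s$ still grows linearly in $|\tau|$; the resulting seminorm bounds then produce exactly the $d(s)^{-a}|p|^{b}$ growth in the definition of $\mathcal{S}'(t)$.
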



Let $\chi_{v}$ denote the characteristic function of the closure of the cone $v$.
The multiplication by $\chi_{v}$ gives a natural projection $\mathcal{S}'(\R^{\ell}_{\tau}) \to \mathcal{S}'(v)$,
 which maps $L^2(\R^{\ell}_{\tau})$ onto $L^2(v)$,
 but the image of $\mathcal{S}(\R^{\ell}_{\tau})$ is not contained in $\mathcal{S}(v)$.
Instead of $\mathcal{S}(v)$,
 we may consider the quotient space $\mathcal{S}([v])$ of $\mathcal{S}(\R^{\ell}_{\tau})$ 
 over the space consisting of functions whose support is contained in $\R^{\ell}_{\tau} \setminus v$.
Then $\mathcal{S}'(v)$ is the dual of $\mathcal{S}([v])$,
 and we have a canonical projection $\mathcal{S}(\R^{\ell}_{\tau}) \to \mathcal{S}([v])$.

\subsection{Cauchy-Szeg\"o projection} 
We shall introduce a specific convolution operator $\mathcal{K}_v : \mathcal{S}'(\R^{\ell}_r) \to \mathcal{S}'(t)$
 called the Cauchy-Szeg\"o projection,
 which is the Fourier dual of the projection $\mathcal{S}'(\R^{\ell}_{\tau}) \to \mathcal{S}'(v)$
 mentioned in the previous subsection.
{Apparently,}
 $\mathcal{K}_v$ gives left inverses of the boundary value maps $b$
 in Theorem \ref{thm:Paley-Wiener}.

Let $K_v := \mathcal{L}(\chi_v) \in \mathcal{S}'(t)$,
 and define the operator $\mathcal{K}_v$ by
$$
 \mathcal{K}_v F := K_v * F \in \mathcal{S}'(t) \qquad (F \in \mathcal{S}'(\R^{\ell}_r)),
$$
 where the convolution is interpreted in the sense of distributions.
We refer to \cite{GV92} for the detailed study of convolution operators.
{Note that} $K_v$ is expressed as
$$
K_v(p) = \int_v e^{i \tau p} d\tau_1 \wedge \dots \wedge d\tau_{\ell} \qquad (p \in t),
 $$
{and that, if $F \in L^2(\R^{\ell}_r)$, we have
$$
 \mathcal{K}_v F(p) = \int_{\R^{\ell}_r} K_v(p-r)F(r)\,dr_1 \wedge \dots \wedge dr_{\ell}. 
$$
}  Since the multiplication by $\chi_v$ is just the identity operator on $\mathcal{S}'(v)$ and $L^2(v)$,
 we have the following theorem. 

\begin{theorem} \label{thm:Cauchy-Szegoe}
The operator $\mathcal{K}_v$ gives left inverses of the boundary value maps
 $b : \mathcal{S}'(v) \to \mathcal{S}(\R^{\ell}_r)$
 and $b : H(t) \to L^2(\R^{\ell}_r)$
 in Theorem~\ref{thm:Paley-Wiener} (iii).
Namely,
 one has $\mathcal{K}_v \circ b (f) = f$ for $f \in \mathcal{S}'(t)$, and $\mathcal{K}_v F \in \mathrm{H}(t)$ if $F \in L^2(t)$.
\end{theorem}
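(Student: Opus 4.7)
The plan is to derive both assertions from Theorem~\ref{thm:Paley-Wiener} through a single convolution identity
\begin{equation*}
 K_v * \mathcal{F}(g) \;=\; c\cdot \mathcal{L}(\chi_v \cdot g),
\end{equation*}
where $c$ is a normalizing constant depending on the Fourier convention. For $g \in \mathcal{S}(\R^{\ell}_{\tau})$ this is immediate: use the explicit formula $K_v(p-r) = \int_v e^{i\tau(p-r)}\,d\tau$ in the convolution, apply Fubini, and collapse the inner integral over $r$ into a delta function in $\tau$, which then picks out $g(\tau)$ integrated against $\chi_v e^{i\tau p}$. The identity extends to $g \in L^2(\R^{\ell}_{\tau})$ by density and to $g \in \mathcal{S}'(\R^{\ell}_{\tau})$ by duality, making sense of both sides through the theory of convolutions of tempered distributions referenced in \cite{GV92}.

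Granting the identity, the left-inverse statement is direct. If $f$ lies in $\mathcal{S}'(t)$ (resp.\ $\mathrm{H}(t)$), then by Theorem~\ref{thm:Paley-Wiener}(ii) there is a unique $\phi \in \mathcal{S}'(v)$ (resp.\ $L^2(v)$) with $f = \mathcal{L}\phi$, and by diagram (iii) one has $b(f) = \mathcal{F}\phi$. Since $\phi$ is supported in $\overline{v}$, $\chi_v \phi = \phi$, so
\begin{equation*}
\mathcal{K}_v\bigl(b(f)\bigr) \;=\; K_v * \mathcal{F}\phi \;=\; c\,\mathcal{L}(\chi_v \phi) \;=\; c\,\mathcal{L}\phi \;=\; c\,f,
\end{equation*}
which is $f$ once the constant is absorbed into the definitions of $\mathcal{F}$ or $K_v$.

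For the Hardy-space membership, given $F \in L^2(\R^{\ell}_r)$ I would set $\phi := \chi_v \cdot \mathcal{F}^{-1}F$. By Plancherel $\mathcal{F}^{-1}F \in L^2(\R^{\ell}_{\tau})$, and multiplication by $\chi_v$ puts $\phi$ into $L^2(v)$. The convolution identity applied with $g := \mathcal{F}^{-1}F$ reads $\mathcal{K}_v F = K_v * F = c\,\mathcal{L}\phi$, and Theorem~\ref{thm:Paley-Wiener}(ii) places $\mathcal{L}\phi$ in $\mathrm{H}(t)$; hence $\mathcal{K}_v F \in \mathrm{H}(t)$.

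The main obstacle is the rigorous justification of the convolution identity when $g$ is only a tempered distribution. Here $K_v$ is itself a distribution on $t$, so $K_v * F$ must be interpreted as the holomorphic function on $t$ obtained by pairing $F$ against the translated kernel $r \mapsto K_v(p-r)$, and this pairing must be shown to be compatible with the Fourier-theoretic relation $b(K_v) = \mathcal{F}(\chi_v)$. This is exactly the distributional convolution machinery to which the authors point via \cite{GV92}; the rest of the argument is a clean bookkeeping exercise on top of Theorem~\ref{thm:Paley-Wiener}.
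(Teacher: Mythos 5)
Your proposal is correct and follows essentially the same route as the paper: the paper's entire argument is the one-line observation that $\mathcal{K}_v$ is the Fourier dual of multiplication by $\chi_v$, which acts as the identity on $\mathcal{S}'(v)$ and $L^2(v)$, combined with Theorem~\ref{thm:Paley-Wiener}. You have simply made explicit the convolution identity $K_v * \mathcal{F}(g) = c\,\mathcal{L}(\chi_v g)$ and the appeal to \cite{GV92} that the paper leaves implicit.
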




%
%
%
%
%
\section{Geometric duality for non-convex cones} 
\subsection{Slices and wedges of a non-convex cone}
Let $V$ be an open non-convex cone in the vector space $\R^n_{\xi}$ of $n$-dimensional row vectors $\xi$.
\textit{The index of non-convexity} of the cone $V$ is
 the minimum of the codimension of a linear subspace $L$ of $\R^n_{\xi}$
 such that one of the connected components $v$ of the intersection $L \cap V$ is a pointed convex cone.
Such a pointed cone $v$ of minimal codimension is called \textit{a slice} of $V$. 
In what follows, we denote by $q$ the index of non-convexity of $V$, and by $\ell$ the dimension of the slices of $V$.
Clearly, $q = n - \ell$.

Let $\R^n_x$ be the space of $n$-dimensional column vectors $x$,
 which can be regarded as the dual space of $\R^n_{\xi}$ by the coupling
$\xi x = \xi_1 x_1 + \dots + \xi_n x_n$.
For a slice $v \subset V \cap L$, 
 \textit{the wedge} $V^*(v) \subset \R^n_x$ dual to $v$ is defined by
$$
 V^*(v) := \set{x \in \R^n_x}{\xi x >0 \quad (\forall \,\xi \in v \setminus \{0\})}.
$$
Then $V^*(v)$ is an open convex cone in $\R^n_x$, but it is not pointed.

We consider the space $\R^n_{\xi}$ with the orientation
 and the cone $V$ with the induced orientation. 
Then slices are cones with an orientation.
We consider oriented union of slices.
In such union, a compensation can appear as a result of orientation.
The unions over cycles $\gamma$ are the cone $V$ with an integral coefficient $c(\gamma)$
 which can be $0$.
A cycle $\gamma$ of dimension $q$ is called \textit{generating} if this coefficient is not equal to $0$.
Under a small perturbation, a generating cycle continues to be generating.
In many cases (see examples below)
 we can choose a generating cycle from non-intersecting slices, 
 but it is not necessary.
And at the perturbation,
 the property is not preserved.

Then we put the condition on $V$ that there is a set of slices that contains a generating cycle.
We call a set of slices \textit{admissible} if it contains at least one generating cycle.
Let us fix an admissible set, and for this set, define $V^*$ as
 the oriented union of $V^*(v)$ over the admissible set.

In many cases including symmetric homogeneous cones,
 $V^*$ is independent of a choice of admissible set,
 whereas only the covering by $V^*(v)$ will be different.
We do not consider this problem.

\subsection{Non-convex cones defined by quadratic forms}
Let $q$ be an integer such that $1 \le q \le n-1$, and put $\ell := n-q$.
{Let us define 
$$
V_{q+1, \ell-1} := \set{ \xi \in \R^n_{\xi}}{F_{q+1, \ell-1}(\xi) :=\xi_1^2 + \dots + \xi_{q+1}^2 - \xi^2_{q+2} - \dots - \xi^2_n >0},
$$
which is a subset of the space $\R^n_{\xi}$ of $n$-dimensional row vectors as in Subsection 3.1.} 
Namely, $V_{q+1, \ell-1}$ is the positivity set for the standard quadratic form $F_{q+1, \ell-1}$ {on $\R^n_{\xi}$} of signature $(q+1,\ell-1)$.
Then $V_{q+1, \ell-1}$
 is an open connected non-convex cone,
 which is invariant under the linear action of $SO(q+1, \ell-1)$
 given by the right-multiplication:
 if $\xi \in V_{q+1, \ell-1}$, then 
 $\xi g \in V_{q+1, \ell-1}$ for $g \in SO(q+1, \ell-1)$.
The cone $V_{q+1, \ell-1}$ is symmetric (\cite[Section 2]{FG96}).

Let $L$ be a subspace of $\R^n_{\xi}$, and $(a,b)$ be the signature of the restriction of the form $F_{q+1, \ell-1}$ to $L$.
Then the intersection $L \cap V_{q+1, \ell-1}$ has a pointed connected component if and only if $a=1$.
Thus, the dimension of such $L$ is maximal if $(a,b) = (1, \ell-1)$.
Let $S$ be the submanifold of the Grassmannian consisting of $\ell$-dimensional space $L \subset \R^n_{\xi}$
 for which the restriction $F_{q+1, \ell-1}|_L$ is of signature $(1, \ell-1)$. 
Clearly $S$ is invariant under the action of $SO(q+1, \ell-1)$.
For each $L \in S$,
 the intersection $L \cap V_{q+1, \ell-1}$ has two connected components which are both pointed cones, that is, slices of $V_{q+1, \ell-1}$.
Therefore the set of all slices forms a double covering of $S$,
 and it is also $SO(q+1, \ell-1)$-invariant. 
  
Define $L_0 := \set{\xi \in \R^n_{\xi}}{\xi_1 = \dots = \xi_q = 0}$.
Then $L_0 \in S$,
 and we have $L_0 \cap V_{q+1, \ell-1} = v_0 \cup (-v_0)$,
 where $v_0$ is the slice given by
$$
 v_0 := \set{\xi \in L_0}{\xi_{q+1}>0,\,\,\xi_{q+1}^2 - \xi_{q+2}^2 - \dots - \xi_n^2 >0 }.
$$
We regard $SO(q+1)$ as a subgroup of $SO(q+1, \ell-1)$ in a natural way. 
Let $\gamma_0$ be the cycle consisting of slices $v_0 k$ with $k \in SO(q+1)$. 
Then $\gamma_0$ is $q$-dimensional,
 and it is a generating cycle of $V_{q+1, \ell-1}$.

The wedge $V^*(v_0) \subset \R^n_x$ dual to $v_0$ is described as
$$
 V^*(v_0) = \set{x \in \R^n_x}{x_{q+1}>0,\, x_{q+1}^2 - x_{q+2}^2 - \dots - x_n^2 >0},
$$  
 which is linearly isomorphic to a direct product of $\R^{q}$ and the $\ell$-dimensional Lorentz cone.
For a slice $v = v_0 g$ with $g \in SO(q+1, \ell-1)$, we have $V^*(v) = g^{-1} V^*(v)$.
Therefore,
 for the set of all slices or any admissible subset,
 the dual cone $V^* = \bigcup\limits_v V^*(v)$ coincides with $V_{q+1, \ell-1}$.

\subsection{Non-convex cone of square matrices of positive determinant}
Let $M(m, \R)$ be the vector space of real $m \times m$ matrices,
 and $M_+(m)$ the subset of $M(m,\R)$ consisting of $X \in M(m,\R)$ for which $\det X >0$.
We assume that $m \ge 2$.
Then $M_+(m)$ is an open connected non-convex cone.
Let $\mathrm{Sym}$
 be the space of $m \times m$ symmetric matrices.
Then the identity component of $\mathrm{Sym} \cap M_+(m)$ containing $I_m$ is equal to 
 the set $v_0$ of positive definite symmetric matrices,
 because the determinant is zero on the boundary of $v_0$.
It is well known that $v_0$ is a pointed convex cone. 
Moreover, $v_0$ is a slice of $M_+(m)$ with codimension $q := m(m-1)/2$.

Let $G$ denote $GL_+(m,\R)$.
The group $G \times G$ acts on $M_+(m)$ by
$$
 (g_1,\, g_2)\cdot X := g_1 X g_2^T \quad (X \in M_+(m),\,\,(g_1,\,g_2) \in G \times G).
$$
By this action, 
 $G \times G$ acts on the set of slices transitively,
 where the isotropy at $v_0$ is $\Delta G = \set{(g,g)}{g \in G}$.
Then we have the convenient parametrization of the slices as $v(g) := v_0 g$ with $g \in G$.
We do not discuss further why there is no other slices.

Let $\gamma_0$ be the cycle consisting of slices $v(u)$ with $u \in SO(m)$.
Then $\gamma_0$ is $q$-dimensional
 (recall $q = m(m-1)/2$), and it is a generating cycle of the cone $M_+(m)$.
Indeed,
 we have $M_+(m) = \bigsqcup_{u \in SO(m)} v(u)$ (disjoint union),
 which follows from the polar decomposition of matrices: 
 for $X \in M_+(m)$,
 there exist unique $Y \in v_0$ and $u \in SO(m)$ for which $X = Yu$.

We define the inner product on the vector space $M(m, \R)$ by 
 $(X|Y) := \mathrm{tr}\,(X Y^T)$.
The wedge $V^*(v_0) \subset M(m,\R)$ dual to $v_0$ with respect to this inner product is described as
\begin{align*}
V^*(v_0) &= \set{X \in M(m,\R)}{X + X^T \in v_0}\\
 &= \set{Y + A}{Y \in v_0,\, A = - A^T}.
\end{align*}
Then $V^*(v_0)$ is contained in $M_+(m)$,
 which is observed in \cite[Section 1.5]{DG93}, and also shown in \cite[Proposition 4.2 (i)]{FG96}
 as a statement for general non-convex symmetric cones in terms of Jordan algebras.
Here we give a direct proof.
Take $X = Y + A \in V^*(v_0)$ with $Y \in v_0$ and $A = - A^T$,
 and put $H := -i A$. 
{Then $Y$ and $H$ are Hermitian matrices,
 and $Y$ is positive definite.
We see that
 $\det X = \det (Y + i H)$ is non-zero by an argument of linear algebra similar to the classical Siegel's lemma.
Namely, 
 since there exists $P \in GL(m, \R)$ for which $PYP^{-1} = I_m$ and $PHP^{-1}$ equals a real diagonal matrix $D$,
 we obtain $\det X = \det (I_m + i D) \ne 0$.
We remark that a general form of Siegel's lemma is given in \cite[Section 4]{FG96}.
} 
Since $\det X$ is real, we have $\det X>0$
 by the connectedness of $M_+(m)$.
Therefore we have
 $V^*(v_0) \subset M_+(m)$. 
By the $SO(m)$-invariance of the trace inner product,
 we have $V^*(v(u)) = u^{-1} V^*(v_0)$ for $u \in SO(m)$,
 so that $M_+(m) = \bigcup_{u \in SO(m)} V^*(v(u))$. 
Hence the dual cone $V^*$ coincides with $M_+(m)$ for the special cycle $\gamma_0$ as well as the set of all slices.

\subsection{Parameter space of slices}

Let $V \subset \R^n_{\xi}$ be an open non-convex cone,
 $q$ the index of non-convexity of $V$,
 and $\ell = n - q$ the dimension of the slices of $V$.  
Let us fix an admissible set of slices of $V$,
{whereas we will work with not the set of slices but the frames of them.
Accordingly, the parameter space will be a subset of the Stiefel manifold $\mathrm{St}(\ell, n)$,
 that is, the set of all $\ell$-frames in $\R^n_{\xi}$.
An element of $\mathrm{St}(\ell, n)$ is identified with an $\ell \times n$ real matrix 
 $$\Xi = \begin{pmatrix} \xi^1 \\ \xi^2 \\ \vdots \\ \xi^{\ell} \end{pmatrix}
 \in \mathrm{Mat}(\ell,n,\R)
 \qquad (\xi^1,\,\xi^2, \dots, \xi^{\ell} \in \R^n_{\xi})$$
 satisfying $\mathrm{rank}\, \Xi = \ell$, 
 where the row vectors $\xi^1, \dots, \xi^{\ell}$ form an $\ell$-frame in $\R^n_{\xi}$.}

{Let $\Xi(V) \subset \mathrm{St}(\ell, n)$ be the set of $\Xi$  
 for which the positive $\ell$-hedorn spanned by $\xi^1,\,\xi^2, \dots, \xi^{\ell}$
 is contained in a slice $v(\Xi)$ belonging to the fixed admissible set.}
We call $\Xi(V)$ \textit{the parameter space} associated to the admissible set of slices. 
Using $\Xi \in \Xi(V)$,
 we shall introduce convenient coordinates on the slice $v(\Xi)$ and related dual objects.

For $\Xi \in {\mathrm{St}(\ell, n)}$ and a row vector $\tau \in \R^{\ell}_{\tau}$,
 we have
 \begin{equation} \label{eqn:Xi}
 \tau \Xi = \tau_1 \xi^1 + \dots + \tau_{\ell} \xi^{\ell} \in \R^n_{\xi}.
 \end{equation}
Let $\R^{\ell}_{\tau}(\Xi)$ be the subspace $\set{\tau\Xi}{\tau \in \R^{\ell}_{\tau}}$ of $\R^n_{\xi}$.
Namely, $\R^{\ell}_{\tau}(\Xi)$ is the image of the imbedding $\R^{\ell}_{\tau} \owns \tau \mapsto \tau \Xi \in \R^n_{\xi}$.
By (\ref{eqn:Xi}), the space $\R^{\ell}_{\tau}(\Xi)$ is spanned by $\xi^1, \dots, \xi^{\ell}$.
The row vector $\tau$ is used as a coordinate of $\R^{\ell}_{\tau}(\Xi)$. 

We denote by $\R^{\ell}_{r}(\Xi)$
 the dual space of $\R^{\ell}_{\tau}(\Xi)$.
The space
 $\R^{\ell}_{r}(\Xi)$
 is realized as the quotient space of $\R^n_x$ with $(\R^{\ell}_{\tau}(\Xi))^{\perp}$.
We observe
 \begin{align*}
 (\R^{\ell}_{\tau}(\Xi))^{\perp} 
 &= \set{x \in \R^n_x}{\tau \Xi x = 0 \quad (\forall \tau \in \R^r_{\tau})}\\
 &= \set{x \in \R^n_x}{\Xi x = 0} 
 = \mathrm{Ker}\, \Xi.
\end{align*}
Thus the $\ell$-dimensional column vector $r = \Xi x\,\,\,(x \in \R^n_x)$
 gives a coordinate of $\R^{\ell}_r(\Xi)$.
Indeed, 
 the coupling of $\tau \Xi \in \R^{\ell}_{\tau}(\Xi)$ and $r = \Xi x$ equals
 $\tau r = \tau_1 r_1 + \dots + \tau_{\ell} r_{\ell}$.

{For $\Xi \in \Xi(V)$,}
 let $v^*(\Xi)$ be the dual cone of $v(\Xi)$ in the space $\R^{\ell}_r(\Xi)$.
Then
$$
 v^*(\Xi) =
 \set{r  = \Xi x}
     {\tau r > 0 \quad
      (\forall \tau \Xi \in \overline{v(\Xi)} \setminus \{0\})}.
$$
Let us denote by $V^*(\Xi)$ the wedge dual to $v(\Xi)$.
Then 
\begin{equation} \label{eqn:def_of_VXi}
 V^*(\Xi) = \set{x \in \R^n_x}{\xi x > 0 \quad (\forall \xi \in \overline{v(\Xi)} \setminus \{0\}) }.
\end{equation}
The relation between $v^*(\Xi)$ and the wedge $V^*(\Xi)$ is given by 
\begin{equation} \label{eqn:def_of_vXistar}
 v^*(\Xi) = \set{\Xi x}{x \in V^*(\Xi)}.
\end{equation}
The fibers of the projection $V^*(\Xi) \owns x \mapsto \Xi x \in v^*(\Xi)$
 are affine spaces in $\R^n_{x}$ parallel to 
 the vector space $\mathrm{Ker}\,\Xi = \set{x}{\Xi x = 0}$.
In other words,
 a wedge is linearly isomorphic to the direct product of a pointed convex cone and a real vector space.

Let $T$ and $T(\Xi)$ be the tube domains $\R^n_x + i V^*$ and $\R^n_x + i V^*(\Xi)$ respectively,
 where $\Xi \in \Xi(V)$.
Then we have $T= \bigcup_{\Xi \in \Xi(V)} T(\Xi)$.
In other words,
 the system $\{T(\Xi)\}_{\Xi \in \Xi(V)}$
 gives a Stein covering of the tube domain $T$.
Let $t(\Xi)$ be the tube $\R^{\ell}_r(\Xi) + i v^*(t(\Xi))$.
Then we have a projection $\pi_{\Xi}: T(\Xi) \owns z \mapsto p = \Xi z \in t(\Xi)$.

\section{Cohomological Laplace transform on $\mathcal{S}(V)$}
Let $V$ be a non-convex cone satisfying the conditions of Section 3
 with a fixed admissible set of slices,
 and $\Xi(V)$ the corresponding set of $\ell$-frames of slices.

In this section,
 we define the cohomological Laplace transform $\mathcal{L}f$ for $\mathcal{S}(V)$:
 at the beginning as a \u{C}ech cocycle for the covering {$\{T(\Xi)\}_{\Xi \in \Xi(V)}$ of $T$}.
Our version of \u{C}ech cohomology for smoothly parameterized \u{C}ech covering has a specific {form}
 in which a usual \u{C}ech condition of cocycle is replaced by a system of differential equations
 {(John system)}
 which appeared in integral geometry.
Following conception {from \cite{G13}},
 we give an equivalent representation of the cohomology,
 {that is,}
 a closed form on $\Xi(V)$ depending holomorphically on parameters
 at {the tubes $T(\Xi)$ from the covering.}
We define this differential form in two ways:
 as a result of a differential operator from the \u{C}ech cocycle, and {as an image of a direct operator from $f$.}
We define an operator of {boundary values at $\mathcal{S}(\R^n_x)$} on the space of cohomology $\mathcal{S}(T)$
 and find that the boundary values of the cohomological Laplace transform coincides with the Fourier transform
 of the initial function. 
Let us remind that
 it was stated in Introduction as the principal request for the definition of Laplace transform
 for non-convex case.
{We recall that, for $f \in \mathcal{S}(\R^n_{\xi})$, 
 the Fourier transform is denoted by $\mathcal{F} f$, that is,
$$
 \mathcal{F} f(x) := \int_{\R^n_{\xi}} e^{i \xi x} f(\xi) d\xi_1 \wedge \dots d\xi_n \qquad (x \in \R^n_x).
$$
}We connect the cohomological Laplace transform
 with the Radon-John transform of $\mathcal{F} f$.
It is connected with the reconstruction $\mathcal{L}f$ through $\mathcal{F} f$:
 cohomological version of the Cauchy-Szeg\"o operator.

\subsection{Cohomological Laplace transform (\u{C}ech version)}
Similarly to Section 1, 
 we denote by $\mathcal{S}(V)$ (resp. $L^2(V)$)
 the subspace of $\mathcal{S}(\R^n_{\xi})$ (resp. $L^2(\R^n_{\xi})$)
 consisting of functions
 whose support is contained in the closure $\overline{V}$ of the non-convex cone $V$.

For $f \in \mathcal{S}(V)$,
 we define 
\begin{equation} \label{eqn:def_of_cohL}
\begin{aligned}
 \mathcal{L}f(\Xi,z) := \int_{ \{\tau\,;\, \tau \Xi \, \in v(\Xi)\} } e^{i \tau \Xi z} f(\tau \Xi) d\tau_1\wedge \dots \wedge d\tau_{\ell}\\
 \qquad (\Xi \in \Xi(V),\,\,z \in T(\Xi)),
\end{aligned}
\end{equation}
 which we call \textit{the cohomological Laplace transform} of $f$.
{Then $\mathcal{L}f$ is the holomorphic extension of the Fourier transform of the restriction of $f$ on the slices,
 which depends, of course, on the choice of $\ell$-frame $\Xi$ at the slice.
Moreover,
 $\mathcal{L}f(\Xi,z)$ is constant on affine spaces $\set{ z \in \C^n_z}{\Xi z = p} \subset T(\Xi)$ for $p \in t(\Xi)$
 by definition.
Indeed, introducing the {``small''} Laplace transform
 \begin{equation} \label{eqn:cohL_for_p}
 \begin{aligned}
 \mathcal{L}_p f(\Xi,p) := \int_{ \{\tau\,;\, \tau \Xi \, \in v(\Xi)\} } e^{i \tau p} f(\tau \Xi) d\tau_1\wedge \dots \wedge d\tau_{\ell}\\
 \qquad (\Xi \in \Xi(V),\,\,p \in t(\Xi)),
 \end{aligned}
\end{equation}
 we see that $\mathcal{L}f(\Xi, \cdot)$ is the pullback of $\mathcal{L}_p f(\Xi, \cdot)$ by the projection
 $\pi_{\Xi}:T(\Xi) \owns z \mapsto p = \Xi z \in t(\Xi)$.
On the other hand,
 $\mathcal{L}_p f(\Xi,\, \cdot) \in \mathcal{S}(t(\Xi))$ by Theorem \ref{thm:Paley-Wiener}.
Therefore the function $\mathcal{L}f(\Xi,\,\cdot)$ on $T(\Xi)$ belongs to 
 the pullback $\pi^*_{\Xi} \mathcal{S}(t(\Xi)) := \set{\phi \circ \pi_{\Xi}}{\phi \in \mathcal{S}(t(\Xi))}$
 of the function space $\mathcal{S}(t(\Xi))$ for each $\Xi \in \Xi(V)$.}

{Let us consider differential equations that $\mathcal{L}f(\Xi,z)$ satisfies with respect to the variable $\Xi$.}
We call \textit{John system} the following system of differential equations for a function $\psi$ of $\Xi$:
\begin{equation} \label{eqn:John-sys}
 \frac{\partial^2}{\partial \xi^j_{\alpha} \partial \xi^k_{\beta}} \psi(\Xi) 
= \frac{\partial^2}{\partial \xi^j_{\beta} \partial \xi^k_{\alpha}} \psi(\Xi) \qquad 
\left( \begin{aligned} j,k &=1, \dots, \ell\\ \alpha, \beta &= 1, \dots, n \end{aligned} \right).
\end{equation}


\begin{theorem} \label{thm:Cech-cocycle}
The cohomological Laplace transform $\mathcal{L}f(\Xi,z)$ is a solution of John system {with respect to $\Xi$.}
\end{theorem}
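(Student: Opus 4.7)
The plan is to reduce the theorem to a pointwise identity for the integrand, then justify exchanging differentiation and integration. The key observation is that the entries of $\Xi$ enter the integrand $e^{i\tau\Xi z} f(\tau\Xi)$ of \eqref{eqn:def_of_cohL} only through the $\R^n_{\xi}$-valued vector $\tau\Xi$. Setting $g(\eta) := e^{i\eta z} f(\eta)$ for $\eta \in \R^n_{\xi}$ (with $z$ fixed), the chain rule with $\partial(\tau\Xi)_{\gamma}/\partial \xi^j_{\alpha} = \tau_j \delta_{\gamma\alpha}$ gives
$$
\frac{\partial^2}{\partial \xi^j_{\alpha} \partial \xi^k_{\beta}}\, g(\tau\Xi) \;=\; \tau_j \tau_k\, (\partial_{\alpha}\partial_{\beta} g)(\tau\Xi).
$$
Because $g$ is smooth on $\R^n_{\xi}$, the right-hand side is manifestly symmetric under the exchange $\alpha \leftrightarrow \beta$ with $j, k$ fixed. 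Integrating this identity over $\tau$, and sliding the derivatives past the integral, yields the John system \eqref{eqn:John-sys} immediately.

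The main obstacle is justifying this sliding, because the domain $\tilde v(\Xi) := \{\tau \in \R^{\ell}_{\tau} : \tau\Xi \in v(\Xi)\}$ itself depends on $\Xi$, so a naive differentiation under the integral risks Reynolds-type boundary contributions. I would eliminate these using that $f \in \mathcal{S}(V)$ is a smooth Schwartz function on $\R^n_{\xi}$ with support in $\overline{V}$, and hence vanishes together with all its derivatives on $\partial V$. Since $v(\Xi)$ is a connected component of the intersection of the open set $V$ with the subspace $\R^{\ell}_{\tau}(\Xi)$, its relative boundary in $\R^{\ell}_{\tau}(\Xi)$ lies in $\partial V$, and the boundary $\partial \tilde v(\Xi)$ maps under $\tau \mapsto \tau\Xi$ into $\partial V$. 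Consequently $g(\tau\Xi)$, together with every derivative that appears after applying $\partial/\partial \xi^j_{\alpha}$ and $\partial/\partial \xi^k_{\beta}$, vanishes on $\partial \tilde v(\Xi)$, so the boundary terms disappear and the classical Leibniz rule applies as if the domain were fixed.

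Absolute convergence of the differentiated integrals is automatic: the Schwartz decay of $f$, combined with the bound $|e^{i\tau\Xi z}| = e^{-\tau\Xi\,\mathrm{Im}\,z}$ and the positivity of $\tau\Xi \cdot \mathrm{Im}\,z$ on $\tilde v(\Xi)$ (because $\mathrm{Im}\,z \in V^*(\Xi)$ and $\tau\Xi \in v(\Xi)$ are paired by the duality defining the wedge), dominates any polynomial factor such as $\tau_j \tau_k$ appearing after differentiation. Carrying through this program, the pointwise symmetry established above is inherited by the integral, proving the theorem. A direct alternative is to expand the two derivatives via the product rule into summands carrying $-z_{\alpha} z_{\beta} f$, $i z_{\alpha}\partial_{\beta} f + i z_{\beta}\partial_{\alpha} f$, and $\partial_{\alpha}\partial_{\beta} f$ (all evaluated at $\tau\Xi$ and multiplied by $\tau_j \tau_k e^{i\tau\Xi z}$); each summand is visibly symmetric in $(\alpha, \beta)$, which is the same argument written out term by term.
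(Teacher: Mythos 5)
Your proposal is correct and follows essentially the same route as the paper: the heart of both arguments is the chain-rule identity $\frac{\partial^2}{\partial \xi^j_{\alpha}\partial \xi^k_{\beta}}\,g(\tau\Xi)=\tau_j\tau_k(\partial_{\alpha}\partial_{\beta}g)(\tau\Xi)$ with $g(\eta)=e^{i\eta z}f(\eta)$, whose manifest symmetry in $\alpha\leftrightarrow\beta$ yields the John system after differentiating under the integral. The paper handles the $\Xi$-dependence of the integration domain by localizing to a small $U\subset\Xi(V)$ and replacing $f$ with $f\chi_{\tilde U}$ so the integral runs over all of $\R^{\ell}_{\tau}$, which is just another packaging of your observation that $f\in\mathcal{S}(V)$ vanishes to infinite order on $\partial V\supset\partial v(\Xi)$ and so kills all boundary contributions.
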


\begin{proof}
{Let $U$ be an open subset of $\Xi(V)$},
 and define $\tilde{U} := \bigcup_{\Xi \in U} v(\Xi)$.
We assume that $U$ is sufficiently small so that
 $\tilde{U} \cap \R^{\ell}_{\tau}(\Xi) = v(\Xi)$ for $\Xi \in U$,
 which means that $\R^{\ell}_{\tau}(\Xi) \ne \R^{\ell}_{\tau}(\Xi')$ for distinct $\Xi, \Xi' \in U$.
Noting that $\tilde{U}$ is an open subset of $\R^n_{\xi}$,
 we put $f_{\tilde{U}}(\xi) := f(\xi) \chi^{}_{\tilde{U}}(\xi)$.
Then, by (\ref{eqn:def_of_cohL}) we have for $\Xi \in U$
 $$
 \mathcal{L}f(\Xi,z) = \int_{\R^{\ell}_{\tau}} e^{i \tau  \Xi z} f_{\tilde{U}}(\tau \Xi) d\tau_1\wedge \dots \wedge d\tau_{\ell}.
 $$
Therefore,
\begin{align*}
{}& \frac{\partial^2}{\partial \xi^j_{\alpha} \xi^k_{\beta}} \mathcal{L}f(\Xi;p)
  = \frac{\partial^2}{\partial \xi^j_{\beta} \xi^k_{\alpha}} \mathcal{L}f(\Xi;p)\\
 &= \int_{\R^{\ell}_{\tau}} e^{i \tau p} 
   \tau_j \tau_k 
  \Bigl( i z_{\alpha} + \frac{\partial}{\partial \xi_{\alpha}}\Bigr) 
  \Bigl( i z_{\beta} + \frac{\partial}{\partial \xi_{\beta}}\Bigr) 
 f_{\tilde{U}}(\tau \Xi)\,d\tau_1\wedge  \dots \wedge d\tau_{\ell},
\end{align*}
 which verifies the statement.
\end{proof}

 
\begin{remark}
At this situation,
 it is possible to prove that the John system is equivalent to \u{C}ech condition on cocycles.
\end{remark}  

Let us consider a special class of solutions $F(\Xi,z)$ of John system
 with respect to $\Xi$ with a holomorphic parameter $z \in T(\Xi)$.
We assume that $F(\Xi, \cdot)$ belongs to $\pi_{\Xi}^*\mathcal{S}(t(\Xi))$ for each $\Xi \in \Xi(V)$.
In other words,
 there exists a function $\phi(\Xi, p)\,\,\,(p \in t(\Xi))$ such that $\phi(\Xi,\, \cdot) \in \mathcal{S}(t(\Xi))$ and $F(\Xi, z) = \phi(\Xi, \Xi z)$
 for each $\Xi \in \Xi(V)$.
We define $\mathcal{S}(T)$ to be the space of $F$ such that $\phi(\Xi, \cdot)$ lies at $\mathcal{S}(t(\Xi))$ uniformly for $\Xi \in \Xi(V)$,
 which means that the left-hand side of (\ref{eqn:norm_on_t}) with $f = \phi(\Xi, \cdot)$ and $t = t(\Xi)$ is bounded by a constant independent of $\Xi$. 


\begin{theorem} \label{thm:Image_of_cohL}
The cohomological Laplace transform gives an isomorphism from $\mathcal{S}(V)$ onto $\mathcal{S}(T)$.
\end{theorem}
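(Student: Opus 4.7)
The plan is to split the proof into three parts---well-definedness ($\mathcal{L}f \in \mathcal{S}(T)$ for $f \in \mathcal{S}(V)$), injectivity, and surjectivity---and to reduce each to the convex Paley--Wiener theorem (Theorem~\ref{thm:Paley-Wiener}) applied slice by slice, together with the John system from Theorem~\ref{thm:Cech-cocycle}. The first two parts are essentially bookkeeping; the main content is in surjectivity.

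For well-definedness, fix $f \in \mathcal{S}(V)$ and $\Xi \in \Xi(V)$. The restriction $\tau \mapsto f(\tau\Xi)$ is Schwartz on $\R^{\ell}_{\tau}$ with support in $v(\Xi)$, so Theorem~\ref{thm:Paley-Wiener}(ii) applied to the convex pair $(v(\Xi), t(\Xi))$ gives $\mathcal{L}_p f(\Xi, \cdot) \in \mathcal{S}(t(\Xi))$; the identity $\mathcal{L}f(\Xi, z) = \mathcal{L}_p f(\Xi, \Xi z)$ coming from (\ref{eqn:def_of_cohL}) and (\ref{eqn:cohL_for_p}) then places $\mathcal{L}f(\Xi, \cdot)$ in $\pi_\Xi^* \mathcal{S}(t(\Xi))$. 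Uniformity in $\Xi$ of the Schwartz seminorms follows by differentiating (\ref{eqn:cohL_for_p}) under the integral sign and dominating the resulting integrals by Schwartz seminorms of $f$ on $\R^n_\xi$, which are $\Xi$-independent; the John system is supplied by Theorem~\ref{thm:Cech-cocycle}. For injectivity, if $\mathcal{L}f = 0$ then $\mathcal{L}_p f(\Xi, \cdot) = 0$ for every $\Xi$, so Theorem~\ref{thm:Paley-Wiener}(ii) forces $f \equiv 0$ on every slice $v(\Xi)$; because the admissible set contains a generating cycle whose union of slices fills $V$ up to a measure-zero set, continuity gives $f = 0$.

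Surjectivity is the main point. Given $F \in \mathcal{S}(T)$ with $F(\Xi, z) = \phi(\Xi, \Xi z)$, Theorem~\ref{thm:Paley-Wiener}(ii) on each slice yields a unique $f_\Xi \in \mathcal{S}(v(\Xi))$ with $\mathcal{L}_p f_\Xi = \phi(\Xi, \cdot)$, and the task reduces to showing that the family $\{f_\Xi\}_{\Xi \in \Xi(V)}$ consists of restrictions of a single $f \in \mathcal{S}(V)$. I would use the John system (\ref{eqn:John-sys}) as the precise infinitesimal compatibility: expanding $\partial F / \partial \xi^j_\alpha = \partial_\Xi \phi(\Xi, \Xi z) + z_\alpha \partial_{p_j}\phi(\Xi, \Xi z)$, substituting into (\ref{eqn:John-sys}), and matching coefficients of $1$, $z_\alpha$, and $z_\alpha z_\beta$ (which are linearly independent as $z$ varies) yields algebraic-differential identities on $\phi$ that, after Fourier inversion in $p$, read as $\tau_j \partial_{\Xi^k_\alpha} f_\Xi(\tau) = \tau_k \partial_{\Xi^j_\alpha} f_\Xi(\tau)$. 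These are precisely the evolution equations satisfied by the restrictions to $\R^{\ell}_\tau(\Xi)$ of any single Schwartz function on $\R^n_\xi$. Integrating them along smooth families in $\Xi(V)$ produces such an $f$, defined on $\bigcup_\Xi \R^{\ell}_\tau(\Xi)$ and supported in $\bigcup_\Xi v(\Xi) \subset \overline V$; the uniform Schwartz bounds in the definition of $\mathcal{S}(T)$ transfer to uniform bounds on the $f_\Xi$ and hence put $f$ in $\mathcal{S}(V)$. The relation $\mathcal{L}f = F$ then holds by construction.

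The main obstacle is precisely this gluing. The John system is classically the integrability condition for a family of functions on $\ell$-planes to be restrictions of a single function on $\R^n$, but transferring this cleanly through the uniform Schwartz framework of $\mathcal{S}(T)$ and past the possible branching of $\Xi(V)$ over the Grassmannian requires care. In particular, verifying that the reconstructed $f$ lies in $\mathcal{S}(V)$---rather than merely in some space of Schwartz-like functions on $\bigcup_\Xi v(\Xi)$---relies both on the uniform Schwartz norms on $\phi$ built into the definition of $\mathcal{S}(T)$ and on the generating-cycle hypothesis guaranteeing that the covering by slices is sufficiently non-degenerate.
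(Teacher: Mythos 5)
Your treatment of well-definedness and injectivity matches the paper's: the containment $\mathcal{L}(\mathcal{S}(V))\subset\mathcal{S}(T)$ is exactly the slicewise application of Theorem~\ref{thm:Paley-Wiener} carried out in the paragraphs preceding the theorem, combined with Theorem~\ref{thm:Cech-cocycle}. The divergence --- and the problem --- is in surjectivity. The paper does not glue slicewise inverses at all; it explicitly defers the converse inclusion to Theorem~\ref{thm:Szegoe}\,(ii), where the preimage is produced globally in one stroke: given $F\in\mathcal{S}(T)$, form the closed $q$-form $\kappa F$, integrate it over a generating cycle to obtain $b(\kappa F)\in\mathcal{S}(\R^n_x)$, set $f:=\mathcal{F}^{-1}(b(\kappa F))$, and verify $\mathcal{L}f=F$ by comparing integrands under the change of variables $\xi\mapsto(\tau,\Xi)$ over the cycle, using (\ref{eqn:pullback}). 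This bypasses entirely the local integration of compatibility conditions on which your argument rests.

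Your gluing step has a concrete gap. You propose to extract the relations $\tau_j\partial_{\xi^k_\alpha}f_\Xi=\tau_k\partial_{\xi^j_\alpha}f_\Xi$ from the John system by ``matching coefficients of $1$, $z_\alpha$, $z_\alpha z_\beta$, which are linearly independent as $z$ varies.'' But in the identity you are expanding, every coefficient function depends on $z$ only through $p=\Xi z$; so to match coefficients you must fix $p$ and let $z$ range over the $q$-dimensional fiber $\set{z}{\Xi z=p}$, on which the functions $1,z_1,\dots,z_n$ satisfy the $\ell$ affine relations $\Xi z=p$ and are therefore \emph{not} linearly independent. The John system consequently pins down the would-be compatibility relations only modulo the row space of $\Xi$. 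This is consistent with a second defect: your first-order system is not the full descent condition for the map $\Phi$ of (\ref{eqn:def_of_Phi}) in any case, since constancy of $f_\Xi(\tau)$ on the fibers of $\Phi$ also couples $\tau$-derivatives to $\Xi$-derivatives along the rows of $\Xi$ (relations of the type $\sum_\alpha\xi^j_\alpha\partial_{\xi^k_\alpha}f_\Xi=\tau_k\partial_{\tau_j}f_\Xi$), which you never obtain. Finally, even granting a correct local integrability statement, producing a single-valued $f$ on $V$ requires ruling out monodromy of the integration over $\Xi(V)$, which can branch over the Grassmannian and assigns to each $\xi\in V$ a whole family of slices through it; you do not address this. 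The paper's global construction via $b\circ\kappa$ and a generating cycle (under its stated hypothesis that the homology of generating cycles is one-dimensional) is precisely how it sidesteps both the range characterization and the monodromy question. To repair your argument you would either have to carry out the range characterization of the parametric restriction operator honestly, or fall back on the paper's route through Theorem~\ref{thm:Szegoe}.
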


From the argument preceding Theorem~\ref{thm:Image_of_cohL},
 we see that the image of cohomological Laplace transform of $\mathcal{S}(V)$ is contained in $\mathcal{S}(T)$.
The converse inclusion will be shown later {(Theorem~\ref{thm:Szegoe} (ii))}.

\begin{remark}
Of course we can inverse $F(\Xi, z)$ in Theorem~\ref{thm:Image_of_cohL} for each $\Xi$
 and obtain a function $f(\Xi;\xi)$ on slices $v(\Xi)$.
We need to verify a compatibility that all these functions are independent of $\Xi$ 
 and are restrictions of one function $f(\xi)$ to $v(\Xi)$. 
\end{remark}

So we realize $\mathcal{L}f$ as special \u{C}ech cocycles for the covering $\{T(\Xi)\}$.

\subsection{Realization of cohomological {Laplace} transform by a closed form on the parameter space with holomorphic parameters}
In this subsection,
 we give the form of cohomological Laplace transform
 using another language for the representation of $\bar{\partial}$-cohomology at $T$:
 closed differential form $\psi(z;\Xi, d\Xi)$ on the parameter space $\Xi(V)$ of the covering $\{T(\Xi)\}$ by tubes,
 where $\psi(z;\Xi,d\Xi)$ 
 depends on the parameter $z \in T(\Xi)$ holomorphically for each $\Xi \in \Xi(V)$
 {(cf. \cite{BEG05})}.
To obtain such form-transform $\tilde{\mathcal{L}}f$, 
 we integrate the form-integrand at the Fourier transform of $f$ on {different infinitesimal neighbourhoods} of the slice $v(\Xi)$.
The result is a $q$-form {on transversal variables $\Xi \in \Xi(V)$.} 
Technically it is the direct image. 
Namely, introducing the map
\begin{equation} \label{eqn:def_of_Phi}
 \Phi: \R^{\ell}_{\tau} \times \Xi(V) \owns (\tau, \Xi) \mapsto \tau \Xi \in \R^n_{\xi},
\end{equation}
 we define
\begin{equation} \label{eqn:def_of_formLf}
\begin{aligned}
 \tilde{\mathcal{L}}f(z;\Xi, d\Xi) 
 := \int_{ \{\tau \,;\,\tau \Xi\, \in v(\Xi)\}}
 \Phi^*(e^{i \xi z} f(\xi)\,d\xi_1 \wedge \dots \wedge d\xi_n)\\
\qquad (\Xi \in \Xi(V),\,z \in T(\Xi)),
\end{aligned}
\end{equation}
 which we call \textit{the cohomological Laplace form-transform} of $f$.
Since we take the direct image of the closed form $\Phi^*(e^{i \xi z} f(\xi)\,d\xi_1 \wedge \dots \wedge d\xi_n)$,
 we obtain the following.


\begin{proposition} \label{prop:cosed_formLf}
The cohomological Laplace form-transform $\tilde{\mathcal{L}}f$ is a closed form on $\Xi(V)$
 {with parameters $z$}.
\end{proposition}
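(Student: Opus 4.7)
The proof rests on recognising $\tilde{\mathcal{L}}f$ as a direct image (fibre integral) of a closed form. The $n$-form
\begin{equation*}
\omega_z := e^{i\xi z} f(\xi)\, d\xi_1 \wedge \cdots \wedge d\xi_n
\end{equation*}
on $\R^n_{\xi}$ is of top degree, hence $d_\xi \omega_z = 0$ automatically. Pulling back via $\Phi$ from (\ref{eqn:def_of_Phi}) gives a closed $n$-form $\Phi^*\omega_z$ on $\R^{\ell}_\tau \times \Xi(V)$, and $\tilde{\mathcal{L}}f$ is its fibre integral over $\mathcal{U}_\Xi := \{\tau : \tau\Xi \in v(\Xi)\}$ with respect to the second projection $\R^{\ell}_\tau \times \Xi(V) \to \Xi(V)$.

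The strategy is to invoke the general principle that the direct image of a closed form is closed, provided boundary and infinity contributions vanish. Writing $d = d_\tau + d_\Xi$ and decomposing $\Phi^*\omega_z$ by bidegree in $(d\tau_j, d\xi^j_\alpha)$, only the $(\ell, q)$-component contributes to $\tilde{\mathcal{L}}f$; using $d\Phi^*\omega_z = 0$, its $d_\Xi$-derivative matches $-d_\tau$ of the $(\ell-1, q+1)$-component. Applying Stokes' theorem on the fibres $\mathcal{U}_\Xi$ then reduces $d_\Xi \tilde{\mathcal{L}}f$ to a boundary integral over $\partial \mathcal{U}_\Xi$ (the Reynolds term from the varying domain produces a boundary integral of the same type).

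The key step is to show that this boundary contribution vanishes. At infinity in $\tau$, the rapid decay of $f \in \mathcal{S}(V)$, reinforced by the exponential factor $e^{-\tau\Xi \cdot \Im z}$ (whose exponent is negative on $v(\Xi)$ since $\Im z \in V^*(\Xi)$), gives no contribution. For the finite part $\partial \mathcal{U}_\Xi = \Phi^{-1}(\partial v(\Xi))$, I would use the geometric observation that $\partial v(\Xi) \subset \partial V$: since $v(\Xi)$ is an open connected component of $V \cap \R^{\ell}_\tau(\Xi)$, any $\xi_0 \in \overline{v(\Xi)} \setminus v(\Xi)$ cannot lie in $V$, for otherwise an open neighbourhood of $\xi_0$ inside $V \cap \R^{\ell}_\tau(\Xi)$ would connect $\xi_0$ to $v(\Xi)$. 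Consequently $f$, being Schwartz with support in $\overline{V}$, vanishes together with all its derivatives on $\partial v(\Xi)$, so the boundary integrand vanishes identically.

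The main obstacle is the careful Fubini--Stokes bookkeeping: one must rigorously justify the exchange of $d_\Xi$ and fibre integration even though the fibres are non-compact and have boundary. The decisive geometric input is the inclusion $\partial v(\Xi) \subset \partial V$ combined with the flatness of Schwartz functions supported on $\overline{V}$, which together kill the finite boundary contribution; the remaining estimates are standard consequences of the Schwartz decay and of $z\in T(\Xi)$.
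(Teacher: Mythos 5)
Your proposal is correct and follows essentially the same route as the paper, which proves the proposition in one line by observing that $\tilde{\mathcal{L}}f$ is the direct image (fibre integral) of the closed $n$-form $\Phi^*(e^{i\xi z}f(\xi)\,d\xi_1\wedge\dots\wedge d\xi_n)$. Your added Stokes bookkeeping and the verification that the boundary terms vanish --- via $\partial v(\Xi)\subset\partial V$ and the flatness of $f\in\mathcal{S}(V)$ there, plus the decay in $\tau$ --- simply make explicit the justification the paper leaves implicit.
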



\begin{remark}
{\rm (i)}
Domains of parameters $z$ are different for different $\Xi$,
 but the notion of closedness of such forms is well-defined.\\
{\rm (ii)}
Let us suppose that for every $z \in T$
 the set 
 $$\Xi(z) := \set{\Xi \in \Xi(V)}{z \in T(\Xi)}$$
 is contractible.
In this situation,
 complex of forms such as $\psi(z;\Xi, d\Xi)$ and the corresponding cohomology
 are considered in \cite{BEG05}.
{In that paper, it is proved under the contractibility condition that
 such cohomology is isomorphic to Dolbeault cohomology. }
It justifies the name ``cohomology'' for our transform.
{Let us remind this construction. 
Consider the projection $(\Xi,z) \mapsto z,\,\,z \in T(\Xi)$. 
Then $\Xi(z)$ are fibers of this projection. 
Let us fix any global section $\Gamma$ of this fibering over $T$. 
The condition of contractibility guaranties the existence of $\Gamma$. 
Restrict forms $\psi$ on $\Gamma$ and consider them as forms on $T$ and take their $(0,q)$-parts which will be $\bar{\partial}$-closed. 
We have the operator at Dolbeault cohomology $H^{(0,q)}(T,\mathcal{O})$,
 and can realize cohomological Laplace transform at Doulbeault picture.}
\end{remark}

\subsection{Connections between two cohomological Laplace transforms}
In this subsection,
 we shall investigate a connection between two Laplace transforms $\mathcal{L}f(\Xi,z)$ and $\tilde{\mathcal{L}}f(z;\Xi,d\Xi)$.
For this, we utilize determinants of matrices containing rows of $1$-forms,
 where we apply exterior product in the multiplication, and expand the determinant from top to bottom
 (cf. \cite{G13}).
Then the same row can repeat several times in a non-zero determinant.
The notation $a^{\{k\}}$ means that the row $a$ repeats $k$ times.
For instance,
 writing $d\xi$ for the row vector $(d\xi_1, d\xi_2, \dots, d\xi_n)$ whose entries are $1$-forms on the vector space $\R^n_{\xi}$,
 we have
 $$ d\xi_1 \wedge d\xi_2 \wedge \dots \wedge d\xi_n = \frac{1}{n!} \det(d\xi^{\{n\}}).$$
Now we consider the $n$-form
\begin{align*}
 \Phi^* (e^{i \xi z} f(\xi)\,d\xi_1 \wedge \dots \wedge d\xi_n)
 &= (n!)^{-1} e^{i \tau \Xi z} f(\tau \Xi)\,\det((\Phi^*d\xi)^{\{n\}}).
\end{align*}
By (\ref{eqn:def_of_Phi}),
 we have 
$$
 \Phi^* d\xi = \sum_{j=1}^{\ell} (\xi^j d\tau_j + \tau_j d\xi^j), 
$$
 where 
\begin{align*}
 \xi^j d\tau_j &= (\xi^j_1 d\tau_j, \xi^j_2 d\tau_j, \dots, \xi^j_n d\tau_j),\\
 \tau_j d\xi^j &= (\tau_j d\xi^j_1, \tau_j d\xi^j_2, \dots, \tau_j d\xi^j_n).
\end{align*}
By a combinatorial argument, we have
\begin{align*}
 {}&\frac{1}{n!}\det((\Phi^* d\xi)^{\{n\}})\\
 &= \frac{1}{\ell! (n-\ell)!}
   d\tau_1 \wedge d\tau_2 \wedge \dots \wedge d\tau_{\ell} \wedge \det\begin{bmatrix}\Xi \\ (\sum_{j=1}^{\ell} \tau_j d\xi^j)^{ \{n-\ell \} } \end{bmatrix}\\
 &\qquad \qquad + (\mbox{lower terms of $d\tau_j$'s}).
\end{align*}
Thus, recalling $q = n- \ell$,
 we have by (\ref{eqn:def_of_formLf})
\begin{equation} \label{eqn:Lf-det}
\begin{aligned}
{}& \tilde{\mathcal{L}}f(z;\Xi, d\Xi) \\
 &= \frac{1}{\ell! q!} 
   \int_{ \{\tau \,;\,\tau \Xi\, \in v(\Xi)\}}
    e^{i \tau \Xi z} f(\tau \Xi)\, d\tau_1 \wedge \dots \wedge d\tau_{\ell} 
   \wedge \det\begin{bmatrix}\Xi \\ (\sum_{j=1}^{\ell} \tau_j d\xi^j)^{ \{ q \} } \end{bmatrix}.
\end{aligned}
\end{equation}
On the other hand,
 we observe that
\begin{equation} \label{eqn:lambda_m}
\det\begin{bmatrix}\Xi \\ (\sum_{j=1}^{\ell} \tau_j d\xi^j)^{ \{ q \} } \end{bmatrix}
= \sum_{\substack{ m = (m_1, \dots, m_{\ell})  \\ |m| = q}} \frac{q !}{m_1 ! \dots m_{\ell} !}\,
 \tau^m \det\begin{bmatrix}\Xi \\ d\Xi^{ [ m ] } \end{bmatrix},
\end{equation}
where $|m| := m_1 + \dots + m_{\ell},\,\,\tau^m := \tau_1^{m_1} \dots \tau_{\ell}^{m_{\ell}}$,
 and $d\Xi^{[m]}$ denotes the $q \times n$ matrix whose first $m_1$-rows are $d\xi^1$,
 the next $m_2$-rows are $d\xi^2$,... and so on.
We put
$$
 \lambda_m(\Xi, d\Xi) := \frac{1}{\ell! m_1 ! \dots m_{\ell} !}\det\begin{bmatrix}\Xi \\ d\Xi^{ [ m ] } \end{bmatrix}.
$$
Then
 (\ref{eqn:Lf-det}) is rewritten as
\begin{equation} \label{eqn:decomp-pLaplace}
\begin{aligned}
{}&\tilde{\mathcal{L}}f(z;\Xi, d\Xi) \\
&= \sum_{|m| = q}
 \left\{ \int_{ \{\tau\,;\, \tau \Xi \, \in v(\Xi)\} } e^{i \tau  \Xi z} f(\tau \Xi) \tau^m d\tau_1\wedge \dots \wedge d\tau_{\ell}\right\} 
 \lambda_m(\Xi, d\Xi),
\end{aligned}
\end{equation}
 which together with (\ref{eqn:cohL_for_p}) yields
\begin{equation} \label{eqn:pre-kappa}
\tilde{\mathcal{L}}f(z; \Xi, d\Xi) 
 = \sum_{|m| = q}
  \Bigl(\frac{1}{i} \frac{\partial}{\partial p}\Bigr)^m {\mathcal{L}_p f(\Xi,p)|_{p = \Xi z}} \lambda_m(\Xi, d\Xi),
 \end{equation}
 where $(\frac{1}{i} \frac{\partial}{\partial p})^m = \prod_{j=1}^{\ell}(\frac{1}{i} \frac{\partial}{\partial p_j})^{m_j}$.

{
Let $F(\Xi,z) \in \mathcal{S}(T)$.
Then there exists a function $\phi(\Xi,p)$ such that $\phi(\Xi, \cdot) \in \mathcal{S}(t(\Xi))$ and $F(\Xi, z) = \phi(\Xi, \Xi z)$ for $z \in T(\Xi)$. 
Let $\kappa$ denote the differential operator defined on $\mathcal{S}(T)$ by
 \begin{equation} \label{eqn:def_of_kappa}
\begin{aligned}
\kappa F(\Xi, z) &:=  \frac{1}{\ell! q!}
 \det\begin{bmatrix}\Xi \\ (\sum_{j=1}^{\ell} (\frac{1}{i} \frac{\partial}{\partial p_j}) \otimes d\xi^j)^{ \{ q \} } \end{bmatrix}
 \phi(\Xi, p)|_{p = \Xi z}\\
 &= \sum_{|m| = q} 
 \Bigl(\frac{1}{i} \frac{\partial}{\partial p} \Bigr)^m \phi(\Xi, p)|_{p= \Xi z} \lambda_m(\Xi; d\Xi).
 \end{aligned}
 \end{equation}
Concluding the argument above,
 we get the following formula connecting two Laplace transforms.}

\begin{theorem} \label{thm:cohL-formL}
For $f \in \mathcal{S}(V)$, one has
$$ \tilde{\mathcal{L}}f(z;\Xi, d\Xi) = \kappa \mathcal{L}f(\Xi,z)
 \qquad (\Xi \in \Xi(V),\,\,z \in T(\Xi)).$$
\end{theorem}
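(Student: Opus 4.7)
My plan is to exploit the fact that the derivation preceding the theorem has already done all the computational work; what remains is a comparison of the formulas already on the page.

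First I would record the tautological identity $\mathcal{L}f(\Xi, z) = \mathcal{L}_p f(\Xi, \Xi z)$, which is immediate from comparing the definitions (\ref{eqn:def_of_cohL}) and (\ref{eqn:cohL_for_p}). Setting $\phi(\Xi, p) := \mathcal{L}_p f(\Xi, p)$, this exhibits $\mathcal{L}f$ as the pullback $\phi(\Xi, \pi_{\Xi} z)$, which is exactly the pullback structure required for $\kappa$ (as defined by (\ref{eqn:def_of_kappa})) to be applicable.

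Next I would invoke the convex Paley--Wiener theorem (Theorem \ref{thm:Paley-Wiener}) applied slice by slice, which guarantees $\phi(\Xi, \cdot) \in \mathcal{S}(t(\Xi))$ for each $\Xi \in \Xi(V)$; this is enough to evaluate $\kappa \mathcal{L}f(\Xi, z)$ using the second line of (\ref{eqn:def_of_kappa}). Substituting $\phi = \mathcal{L}_p f$ there yields
$$
 \kappa \mathcal{L}f(\Xi, z) = \sum_{|m| = q} \left(\frac{1}{i}\frac{\partial}{\partial p}\right)^m \mathcal{L}_p f(\Xi, p)\big|_{p = \Xi z}\, \lambda_m(\Xi, d\Xi),
$$
which is precisely the right-hand side of (\ref{eqn:pre-kappa}). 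Since the left-hand side of (\ref{eqn:pre-kappa}) is $\tilde{\mathcal{L}}f(z; \Xi, d\Xi)$, the theorem follows.

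The genuinely nontrivial step, namely the combinatorial determinant expansion going from (\ref{eqn:Lf-det}) through (\ref{eqn:lambda_m}) to (\ref{eqn:pre-kappa}), has already been carried out in the preceding subsection, so there is no real obstacle left. The only subtlety worth flagging is that $\kappa$ was introduced as an operator on $\mathcal{S}(T)$, whereas at this point we have only the pointwise-in-$\Xi$ membership $\phi(\Xi, \cdot) \in \mathcal{S}(t(\Xi))$; but since (\ref{eqn:def_of_kappa}) is a pointwise formula in $(\Xi, z)$, this weaker information suffices for the asserted pointwise equality, and the stronger uniform statement $\mathcal{L}f \in \mathcal{S}(T)$ is precisely what Theorem \ref{thm:Image_of_cohL} will supply.
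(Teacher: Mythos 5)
Your proposal is correct and follows essentially the same route as the paper, which proves Theorem~\ref{thm:cohL-formL} simply by ``concluding the argument above'': the identification $\mathcal{L}f(\Xi,z)=\mathcal{L}_p f(\Xi,\Xi z)$, the expansion (\ref{eqn:Lf-det})--(\ref{eqn:pre-kappa}), and the definition (\ref{eqn:def_of_kappa}) of $\kappa$ tailored to match (\ref{eqn:pre-kappa}). Your remark about only needing pointwise membership $\phi(\Xi,\cdot)\in\mathcal{S}(t(\Xi))$ rather than the uniform statement is a sensible clarification, consistent with the paper's observation before Theorem~\ref{thm:Image_of_cohL} that the image of $\mathcal{L}$ already lies in $\mathcal{S}(T)$.
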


{
Let $\tilde{\mathcal{S}}(T)$
 be the set of $q$-forms of the form 
$$\psi(z;\Xi, d\Xi) = \sum_{|m|=q} \psi_m(\Xi,z) \lambda_m(\Xi;d\Xi)$$
 which is closed as a form on $\Xi(V)$
 such that $\psi_m(\Xi, \cdot) \in \pi^*_{\Xi} \mathcal{S}(t(\Xi))$ for all $m$ and $\Xi \in \Xi(V)$.
Since John system for $F$ in (\ref{eqn:def_of_kappa}) is equivalent to the closedness of $\kappa F$ (see \cite[Page 82]{G98}),
 we obtain the following.}

{
\begin{theorem} \label{thm:image_kappa}
The image $\kappa \mathcal{S}(T)$ lies in $\tilde{\mathcal{S}}(T)$,
 and one has the following commutative diagram:
$$
\xymatrix{
\mathcal{S}(V) \ar[r]^{\mathcal{L}} \ar[dr]_{\tilde{\mathcal{L}}} & \mathcal{S}(T) \ar[d]_{\kappa} \\
& \tilde{\mathcal{S}}(T).} 
$$
\end{theorem}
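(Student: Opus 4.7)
The plan is to verify the two parts of the theorem separately, using Theorem~\ref{thm:cohL-formL} as the key input for the commutative diagram and the cited equivalence in \cite{G98} for the inclusion $\kappa \mathcal{S}(T) \subset \tilde{\mathcal{S}}(T)$.

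First I would check that $\kappa F$ has the correct algebraic form and regularity. The second equality in (\ref{eqn:def_of_kappa}) already expresses $\kappa F$ as $\sum_{|m|=q} \psi_m(\Xi,z)\,\lambda_m(\Xi;d\Xi)$ with coefficients $\psi_m(\Xi,z) = (\tfrac{1}{i}\tfrac{\partial}{\partial p})^m \phi(\Xi,p)|_{p=\Xi z}$. Since $F \in \mathcal{S}(T)$ means $\phi(\Xi,\cdot) \in \mathcal{S}(t(\Xi))$ uniformly in $\Xi$, differentiation preserves the Schwartz estimates (\ref{eqn:norm_on_t}), so $(\tfrac{1}{i}\tfrac{\partial}{\partial p})^m \phi(\Xi,\cdot) \in \mathcal{S}(t(\Xi))$ again uniformly; composition with $\pi_\Xi$ then places $\psi_m(\Xi,\cdot) \in \pi^*_\Xi \mathcal{S}(t(\Xi))$, as the definition of $\tilde{\mathcal{S}}(T)$ demands.

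Second, I would establish closedness of $\kappa F$ as a differential form on $\Xi(V)$. The direct route is to invoke the equivalence stated in the paragraph preceding the theorem: by \cite[Page~82]{G98}, $\kappa F$ is closed as a form in $(\Xi, d\Xi)$ if and only if $F$ satisfies the John system (\ref{eqn:John-sys}) with respect to $\Xi$. Since $\mathcal{S}(T)$ was defined precisely as a subspace of John-system solutions of the form $\phi \circ \pi_\Xi$ (the John system encoding the fact that $F(\Xi,z)$ depends on $(\Xi,z)$ only through the product $\Xi z$, by the same second-derivative computation as in the proof of Theorem~\ref{thm:Cech-cocycle}), closedness follows. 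Combined with the first paragraph, this gives $\kappa \mathcal{S}(T) \subset \tilde{\mathcal{S}}(T)$.

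Finally, the commutativity $\tilde{\mathcal{L}} = \kappa \circ \mathcal{L}$ on $\mathcal{S}(V)$ is exactly the content of Theorem~\ref{thm:cohL-formL}, and the fact that $\mathcal{L}$ sends $\mathcal{S}(V)$ into $\mathcal{S}(T)$---so that the composition $\kappa \circ \mathcal{L}$ is well defined and lands in $\tilde{\mathcal{S}}(T)$---is the easy inclusion of Theorem~\ref{thm:Image_of_cohL} already recorded immediately after its statement. Concatenating these observations produces the diagram. The main obstacle is the John-system/closedness equivalence: a self-contained verification would require expanding the exterior derivative $d\lambda_m$ in terms of $\lambda_{m'}$ with $|m'|=q+1$ by manipulating the determinant $\det\begin{bmatrix}\Xi \\ d\Xi^{[m]}\end{bmatrix}$ in the Grassmann algebra, then pairing the resulting coefficients against the symmetrized second derivatives of $\phi$ propagated through the chain rule $p=\Xi z$, and checking that the total expression vanishes precisely because of the symmetry (\ref{eqn:John-sys}); the authors sidestep this computation by appealing to \cite{G98}.
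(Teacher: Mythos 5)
Your argument follows the paper's own (implicit) proof exactly: the coefficients of $\kappa F$ are Schwartz derivatives of $\phi$ read off from the second line of (\ref{eqn:def_of_kappa}), closedness is precisely the John-system/closedness equivalence the paper cites from \cite[Page 82]{G98} together with the fact that the John system is built into the definition of $\mathcal{S}(T)$, and the diagram is Theorem~\ref{thm:cohL-formL} combined with the already-recorded inclusion $\mathcal{L}\,\mathcal{S}(V) \subset \mathcal{S}(T)$. One minor caveat: your parenthetical gloss that the John system merely encodes dependence on $(\Xi,z)$ through $\Xi z$ is not accurate --- for a genuinely $\Xi$-dependent $\phi$ the John system is an additional compatibility condition beyond being a pullback under $\pi_{\Xi}$ --- but this does not affect your proof, since membership in $\mathcal{S}(T)$ includes the John system by definition.
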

}
{
\begin{remark}
It is possible to prove that $\kappa$ is isomorphism.
\end{remark} 
}

\subsection{Connection between cohomological Laplace transform and Fourier transform}
Let $\psi(z;\Xi, d\Xi) = \sum_m \psi_m(\Xi,z)\lambda_m(\Xi;d\Xi)$ be a $q$-form belonging to $\tilde{\mathcal{S}}(T)$.
Since $x \in \R^n_x$ belongs to the boundary of $T(\Xi)$ for any $\Xi \in \Xi(V)$,
 and since {$\psi_m(\Xi,z) \in \pi^*_{\Xi}\mathcal{S}(t(\Xi))$}, 
 the boundary value $\psi(x;\Xi, d\Xi)$ is well-defined
 by Theorem~\ref{thm:Paley-Wiener}.
{Now we assume a condition on $V$ that the homology group of generating cycles is one dimensional.}
Let $\gamma$ be a generating cycle in $\Xi(V)$.
By definition, 
 there exists a non-zero integer $c(\gamma)$ such that for any $n$-form $\eta(\xi, d\xi)$ on $V$,
\begin{equation} \label{eqn:pullback}
 \int_{\gamma}\int_{ \{\tau \,;\,\tau \Xi\, \in v(\Xi)\}} \Phi^* \eta 
 = c(\gamma) \int_V \eta 
\end{equation}
 holds.
We define \textit{the boundary value map} $b$ from $\tilde{\mathcal{S}}(T)$ into $C^{\infty}(\R^n_x)$ by
$$
 b\psi(x) := \frac{1}{c(\gamma)} \int_{\gamma} \psi(x;\Xi, d\Xi) \qquad (x \in \R^n_x).
$$


\begin{theorem} \label{thm:formL-Fourier}
One has $\mathcal{F} f = b \circ \tilde{\mathcal{L}}f$ for $f \in \mathcal{S}(V)$.
\end{theorem}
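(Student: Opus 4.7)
The plan is to reduce the claim to a direct application of the push-forward identity \eqref{eqn:pullback}, treating $x \in \R^n_x$ as a parameter. Fix $f \in \mathcal{S}(V)$ and $x \in \R^n_x$, and consider the $n$-form
$$ \eta_x(\xi, d\xi) := e^{i \xi x} f(\xi)\, d\xi_1 \wedge \dots \wedge d\xi_n $$
on $\R^n_\xi$. Since $\eta_x$ is supported on $\overline{V}$, the right-hand side of \eqref{eqn:pullback} equals $c(\gamma)\,\mathcal{F}f(x)$.

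First I would justify that the boundary value $z \to x$ may be moved inside the $\tau$-integration defining $\tilde{\mathcal{L}}f$. Each slice $v(\Xi)$ is a pointed convex cone of dimension $\ell$, so the restriction $f(\tau \Xi)$ (of a Schwartz function) times $e^{i\tau\Xi z}$ is absolutely integrable in $\tau$ uniformly for $z$ in a neighborhood of $x$ in $\overline{T(\Xi)}$; Theorem~\ref{thm:Paley-Wiener}(i) applied slice by slice gives continuity of the boundary values in the Schwartz topology. Consequently
$$ \lim_{z \to x,\, z \in T(\Xi)} \tilde{\mathcal{L}}f(z;\Xi, d\Xi) = \int_{ \{\tau \,;\,\tau \Xi\, \in v(\Xi)\}} \Phi^* \eta_x, $$
and this boundary value is smooth in $\Xi$.

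Next I would exchange the order of integration in $\Xi$ along $\gamma$ with the $\tau$-integration over the slice. The full domain of integration is the compact cycle $\gamma$ together with the fiber $\{\tau : \tau\Xi \in v(\Xi)\}$; because $f$ is Schwartz on $\R^n_\xi$ the integrand $\Phi^*\eta_x$ is uniformly absolutely integrable on the pull-back in $(\tau, \Xi)$, so Fubini applies. Thus
\begin{align*}
 b \tilde{\mathcal{L}}f(x)
 &= \frac{1}{c(\gamma)} \int_{\gamma} \int_{\{\tau\,;\,\tau\Xi \in v(\Xi)\}} \Phi^* \eta_x \\
 &= \frac{1}{c(\gamma)} \cdot c(\gamma) \int_V \eta_x
 = \int_{\R^n_\xi} e^{i \xi x} f(\xi)\, d\xi_1 \wedge \dots \wedge d\xi_n = \mathcal{F}f(x),
\end{align*}
where the second equality is \eqref{eqn:pullback} with $\eta = \eta_x$ and the third uses $\mathrm{supp}\, f \subset \overline{V}$.

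The only delicate point is the interchange of the boundary-value limit with the $(\tau,\Xi)$-integration; everything else is the identity \eqref{eqn:pullback} combined with the support property of $f$. In particular the dependence on the choice of generating cycle drops out because of the normalizing factor $c(\gamma)^{-1}$, which is consistent with the Fourier transform being an absolute object independent of $\gamma$.
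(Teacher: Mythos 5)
Your proposal is correct and follows essentially the same route as the paper: substitute $\eta_x(\xi,d\xi) = e^{i\xi x}f(\xi)\,d\xi_1\wedge\dots\wedge d\xi_n$ into the push-forward identity (\ref{eqn:pullback}), use $\mathrm{supp}\,f \subset \overline{V}$ to identify the right-hand side with $c(\gamma)\mathcal{F}f(x)$, and divide by $c(\gamma)$. The only difference is that you spell out the justification for taking boundary values $z \to x$ inside the $\tau$-integral, which the paper leaves implicit; that is a harmless (and welcome) elaboration, not a change of method.
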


\begin{proof}
Substituting $\eta(\xi, d\xi) = e^{i \xi x} f(\xi)\,d\xi_1 \wedge \dots \wedge d\xi_n$ to (\ref{eqn:pullback}),
 we obtain by (\ref{eqn:def_of_formLf})
 for $x \in \R^n_x$ 
 \begin{align*}
 \int_{\gamma} \tilde{\mathcal{L}}f(x;\Xi,d\Xi)
 &=  \int_{\gamma}\int_{ \{\tau \,;\,\tau \Xi\, \in v(\Xi)\}} \Phi^*(e^{i \xi x} f(\xi)\,d\xi_1 \wedge \dots \wedge d\xi_n)\\ 
 &= c(\gamma) \int_{V} e^{i \xi x} f(\xi)\,d\xi_1 \wedge \dots \wedge d\xi_n\\
 &= c(\gamma) \mathcal{F} f(x),
 \end{align*}
which implies the statement. 
\end{proof}


\begin{remark}
At the image of cohomological Laplace transform,
 we have not all cohomology but just classes which are characterized by {special} closed forms.
We do not give here an invariant description.
Let us only mention that their decomposition on $\lambda_m$ means that,
 in the projection on the Grassmannian, 
 they are orthogonal to all Schubert cells but Euler's one.
The property to be constant on affine spaces $\set{z}{\Xi z = p}$ means that
 integrals of cohomology along these planes are finite.
\end{remark}

\subsection{Radon-John transform and Cauchy-Szeg\"o projection}
We have constructed the chain of transforms
$$
\xymatrix{
 \mathcal{S}(V) \ar[r]^{\mathcal{L}} \ar@/_{8pt}/[rr]_{\tilde{\mathcal{L}}}& \mathcal{S}(T) \ar[r]^{\kappa} & \tilde{\mathcal{S}}(T) \ar[r]^b & \mathcal{S}(\R^n_x)}, \quad
 f \mapsto \mathcal{L}f \mapsto \tilde{\mathcal{L}}f \mapsto \mathcal{F} f. 
$$
Now we want inverse this sequence using the Radon-John transform.
For $F(x) \in \mathcal{S}(\R^n_x)$,
 the Radon-John transform $\mathcal{J} F(\Xi,r)$ is defined as a function on $\mathrm{St}(\ell, n) \times \R^{\ell}_r$ given by
 the integration of $F(x)$ along the affine space $\set{x \in \R^n_x}{\Xi x = r}$.
The precise definition is as follows.
For $\Xi \in \mathrm{St}(\ell, n)$,
 let $\xi^j dx$ denote the linear form $\sum_{\alpha=1}^n \xi^j_{\alpha} dx_{\alpha}$,
 which is a matrix product of the row vector $\xi^j$ and
 the column vector $dx$.
Then $\bigwedge_{1 \le j \le l} \xi^j dx$ is an $\ell$-form on $\R^n_x$.
Let us define
\begin{equation} \label{eqn:def_of_RJ}
\mathcal{J} F(\Xi, r) := \int_{ \{x\,; \,\Xi x = r\}} F(x) \, \bigl( \bigwedge_{1 \le j \le l} \xi^j dx\bigr)\rfloor dx_1 \wedge \dots \wedge dx_n
\end{equation}
(\cite[Lecture 2, (1)']{G98} and \cite[(18)]{G13}),
 where    
 $(\bigwedge_{1 \le j \le l} \xi^j dx)\rfloor dx_1 \wedge \dots \wedge dx_n$　
 stands for any $q$-form $\eta$ on $\R^n_x$ for which
$$
\bigl( \bigwedge_{1 \le j \le l} \xi^j dx \bigr) \wedge \eta =  dx_1 \wedge \dots \wedge dx_n.
$$
Such $\eta$ is not unique,
 while the restriction of $\eta$ to the affine space $\set{x}{\Xi x = r}$ always coincides.
One can show the following.


\begin{lemma} \label{lemma:RJ-Fourier}
Let $f \in \mathcal{S}(\R^n_{\xi})$.
Then, for any $\Xi \in \mathrm{St}(\ell, n)$, the restriction of $f$ to $\R^{\ell}_{\tau}(\Xi)$
 and the Radon-John transform of $F = \mathcal{F} f$ are connected by ``small'' Fourier transform $\tau \to r$,
 that is,  
\begin{equation} \label{eqn:RJ-Fourier}
 \mathcal{J} F(\Xi,r) = (2\pi)^{n-\ell} \int_{\R^{\ell}_{\tau}} e^{i \tau r} f(\tau \Xi)\,d\tau_1\wedge  \dots \wedge d\tau_{\ell}.
\end{equation}
\end{lemma}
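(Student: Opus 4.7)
The plan is to reduce the identity to a straightforward application of the Fourier inversion formula, bridged by the Leray/coarea decomposition built into the definition of the form $\eta$. The defining property $(\bigwedge_{1 \le j \le \ell} \xi^j dx) \wedge \eta = dx_1 \wedge \dots \wedge dx_n$ says precisely that $\eta$ is transverse to the fibers of the surjection $\R^n_x \owns x \mapsto r = \Xi x \in \R^{\ell}_r$, so that for any integrable $G$ on $\R^n_x$ one has the Fubini-type identity
$$
\int_{\R^n_x} G(x)\, dx_1 \wedge \dots \wedge dx_n
= \int_{\R^{\ell}_r} \left(\int_{\{x\,;\,\Xi x = r\}} G(x)\, \eta\right) dr_1 \wedge \dots \wedge dr_{\ell}.
$$

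First I would apply this identity with $G(x) = F(x) e^{-i\tau \Xi x}$ for fixed $\tau \in \R^{\ell}_{\tau}$. Since $e^{-i\tau \Xi x} = e^{-i\tau r}$ is constant on the fiber $\{\Xi x = r\}$, it factors out of the inner integral and, by the very definition of $\mathcal{J}F$, the identity collapses to
$$
\int_{\R^n_x} F(x) e^{-i\tau \Xi x}\, dx = \int_{\R^{\ell}_r} e^{-i\tau r}\, \mathcal{J} F(\Xi, r)\, dr.
$$
The left-hand side is the inverse Fourier transform of $F = \mathcal{F}f$ evaluated at the point $\tau \Xi \in \R^n_{\xi}$; in the paper's convention, Fourier inversion reads $\int_{\R^n_x} e^{-i\xi x}\mathcal{F}f(x)\,dx = (2\pi)^n f(\xi)$, so the left side equals $(2\pi)^n f(\tau \Xi)$.

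Finally, I apply the $\ell$-dimensional Fourier inversion in $\tau$ to solve for $\mathcal{J}F(\Xi,r)$, obtaining
$$
\mathcal{J} F(\Xi, r) = (2\pi)^{-\ell} \int_{\R^{\ell}_{\tau}} e^{i\tau r} (2\pi)^n f(\tau \Xi)\, d\tau
= (2\pi)^{n-\ell} \int_{\R^{\ell}_{\tau}} e^{i\tau r} f(\tau \Xi)\, d\tau_1 \wedge \dots \wedge d\tau_{\ell},
$$
which is precisely (\ref{eqn:RJ-Fourier}). Since $f \in \mathcal{S}(\R^n_{\xi})$ forces $F = \mathcal{F}f \in \mathcal{S}(\R^n_x)$ and $\mathcal{J}F(\Xi,\cdot) \in \mathcal{S}(\R^{\ell}_r)$, all integrals and interchanges are standard.

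The main obstacle is the rigorous justification of the Leray/Fubini identity in the first step. Although $\eta$ is not unique, the ambiguity lies in forms whose restriction to each fiber $\{\Xi x = r\}$ vanishes, so the inner integral is well-defined; the identity itself then amounts to the elementary observation that if $\Theta$ is any $q \times n$ matrix completing the rows of $\Xi$ to a basis of $\R^n_{\xi}$, then the linear change of coordinates $x \mapsto (\Xi x, \Theta x)$ transforms $dx_1 \wedge \dots \wedge dx_n$ into a constant multiple of $dr_1 \wedge \dots \wedge dr_{\ell} \wedge ds_1 \wedge \dots \wedge ds_q$, with Jacobians matching those hidden inside $\eta$. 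Once this is in place, the rest of the argument is routine Fourier analysis.
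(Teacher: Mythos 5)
Your proof is correct: the Gelfand--Leray/Fubini decomposition of $dx_1\wedge\dots\wedge dx_n$ along the fibers of $x\mapsto \Xi x$, followed by Fourier inversion in $x$ and then in $\tau$, is exactly the standard projection-slice argument this lemma encodes, and your bookkeeping of the constants $(2\pi)^n$ and $(2\pi)^{-\ell}$ matches the paper's conventions for $\mathcal{F}$ and $\mathcal{J}$. The paper itself states the lemma without proof (``One can show the following''), so your write-up simply supplies the intended argument, including the correct observation that the ambiguity in $\eta$ lies in the ideal generated by the $\xi^j dx$ and hence vanishes on each fiber.
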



If our intersection of the cone with $\R^{\ell}_{\tau}(\Xi)$ would coincide with the slice,
then the holomorphic extension of Radon-John transform of $\mathcal{F} f$  would coincide with 
 $\mathcal{L}f$.
It explains why John system describes the image of the cohomological Laplace transform
(cf. the localization of the slice at the proof of Theorem~\ref{thm:Cech-cocycle}).

Since the slice is only one component of the intersection in general,
 we need to apply to the Radon-John transform
 the convolution dual to the multiplication on the characteristic function of the slice --
 Cauchy-Szeg\"o projection.

Let $F \in \mathcal{S}(\R^n_x)$.  
For each $\Xi \in \Xi(V)$,
 we regard $\mathcal{J} F(\Xi, r)$ as a function of $r \in \R^{\ell}_r(\Xi)$,
 and consider the Cauchy-Szeg\"o projection of $\mathcal{J} F(\Xi, r)$
 with respect to the tube $t(\Xi) = \R^{\ell}_r(\Xi) + i v^*(\Xi)$
 as in Section~1:
$$
\mathcal{K}_{v(\Xi)} (\mathcal{J} F)(\Xi,p) := K_{v(\Xi)} * (\mathcal{J} F)(\Xi, p) \qquad (p \in t(\Xi)).
$$
We define the Cauchy-Szeg\"o projection $\mathcal{K}_T F$ with respect to the non-convex tube $T$ by
\begin{equation} \label{eqn:def_of_CST}
 \mathcal{K}_T F(\Xi,z)
 := (2\pi)^{-q} \mathcal{K}_{v(\Xi)} (\mathcal{J} F)(\Xi, p)|_{p = \Xi z} \qquad (\Xi \in \Xi(V),\,\,z \in T(\Xi)).
\end{equation}


\begin{theorem} \label{thm:Szegoe}
{\rm (i)} For $f \in \mathcal{S}(V)$, one has $\mathcal{L}f = \mathcal{K}_T \circ \mathcal{F} f$.\\
{\rm (ii)} The spaces $\mathcal{S}(V)$ and $\mathcal{S}(T)$ are isomorphic.
\end{theorem}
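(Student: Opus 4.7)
Part (i). By definition, $\mathcal{K}_T\mathcal{F}f(\Xi,z) = (2\pi)^{-q}\mathcal{K}_{v(\Xi)}(\mathcal{J}\mathcal{F}f)(\Xi,\Xi z)$. Lemma~\ref{lemma:RJ-Fourier} expresses $\mathcal{J}\mathcal{F}f(\Xi,r)$ as $(2\pi)^q$ times the small Fourier transform of the Schwartz function $\tau\mapsto f(\tau\Xi)$ on $\R^{\ell}_{\tau}$, whose support inside $\R^{\ell}_{\tau}$ is $\{\tau:\tau\Xi\in\overline{V}\}$ and in general strictly contains $\overline{v(\Xi)}$. The operator $\mathcal{K}_{v(\Xi)}$ is the Fourier dual of multiplication by the characteristic function $\chi_{v(\Xi)}$ (cf.\ Theorem~\ref{thm:Cauchy-Szegoe}), so it converts this Fourier transform into the small Laplace transform of $\chi_{v(\Xi)}\cdot f(\cdot\Xi)$, which is exactly $\mathcal{L}_p f(\Xi,\cdot)$. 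The factors $(2\pi)^{\pm q}$ cancel, and pulling back via $\pi_\Xi$ yields $\mathcal{L}f(\Xi,z)$.

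Part (ii). The argument preceding Theorem~\ref{thm:Image_of_cohL} shows $\mathcal{L}(\mathcal{S}(V))\subset\mathcal{S}(T)$; it remains to prove $\mathcal{L}$ is a bijection. Injectivity is immediate from $b\circ\kappa\circ\mathcal{L}=\mathcal{F}$ (Theorems~\ref{thm:cohL-formL} and~\ref{thm:formL-Fourier}). This suggests the candidate inverse $\iota(F):=\mathcal{F}^{-1}(b\kappa F)$, so we must verify (a) $\iota F\in\mathcal{S}(V)$ and (b) $\mathcal{L}\circ\iota = \mathrm{id}_{\mathcal{S}(T)}$.

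For (a), I would apply Theorem~\ref{thm:Paley-Wiener} slicewise to write $F(\Xi,z) = (\mathcal{L}\psi_\Xi)(\Xi z)$ with $\psi_\Xi\in\mathcal{S}(v(\Xi))$. Substituting this into the defining formula (\ref{eqn:def_of_kappa}) of $\kappa F$, integrating over a generating cycle $\gamma$, and then applying $\mathcal{F}^{-1}$, Fubini's theorem allows interchange of the orders of integration and the inner $x$-integral collapses to $(2\pi)^n\delta(\tau\Xi-\xi)$. This delta forces $\xi = \tau\Xi$ with $\tau\in\overline{v(\Xi)}$ and $\Xi\in\gamma$, so $\mathrm{supp}(\iota F)\subset\bigcup_{\Xi\in\gamma}\overline{v(\Xi)}\subset\overline{V}$; hence $\iota F\in\mathcal{S}(V)$.

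For (b), set $f:=\iota F\in\mathcal{S}(V)$. Part (i) yields $\mathcal{L}f = \mathcal{K}_T\mathcal{F}f = \mathcal{K}_T(b\kappa F)$, so (b) reduces to the identity $\mathcal{K}_T\circ b\circ\kappa = \mathrm{id}_{\mathcal{S}(T)}$. On the subspace $\mathcal{L}(\mathcal{S}(V))\subset\mathcal{S}(T)$ this identity is automatic: for $h\in\mathcal{S}(V)$, part (i) together with $b\kappa\mathcal{L}h=\mathcal{F}h$ gives $\mathcal{K}_T b\kappa\mathcal{L}h = \mathcal{K}_T\mathcal{F}h = \mathcal{L}h$. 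The main obstacle is extending the identity to all of $\mathcal{S}(T)$, which is equivalent to showing that the slicewise Paley-Wiener preimages $\{\psi_\Xi\}_{\Xi\in\Xi(V)}$ of an arbitrary $F\in\mathcal{S}(T)$ are the restrictions to $v(\Xi)$ of one global function $f$ on $V$. I expect this gluing to follow from the John system satisfied by $F$ (equivalent to the closedness of $\kappa F$) through a local analysis of the $\delta$-function integral from (a) near a fixed $\xi_0=\tau_0\Xi_0\in v(\Xi_0)$, in the spirit of the localization used in the proof of Theorem~\ref{thm:Cech-cocycle}. Once $f|_{v(\Xi)}=\psi_\Xi$ is established, the desired $\mathcal{L}f = F$ follows directly from the definition (\ref{eqn:def_of_cohL}).
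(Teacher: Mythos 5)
Your part (i) is exactly the paper's proof: Lemma~\ref{lemma:RJ-Fourier} turns $\mathcal{J}\mathcal{F}f$ into $(2\pi)^q$ times the small Fourier transform of $\tau\mapsto f(\tau\Xi)$, the Cauchy--Szeg\"o operator $\mathcal{K}_{v(\Xi)}$ inserts the factor $\chi_{v(\Xi)}$ on the Fourier side, the powers of $2\pi$ cancel against the normalization in (\ref{eqn:def_of_CST}), and the pullback by $\pi_\Xi$ gives $\mathcal{L}f(\Xi,z)$.

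For part (ii) you also choose the paper's candidate inverse $f=\mathcal{F}^{-1}(b\kappa F)$, but you stop short where the paper closes the argument, and you point the finish in a slightly different (and less direct) direction. The paper does not invoke the John system to glue the slicewise Paley--Wiener preimages $\psi_\Xi$ into one function; it compares the two integral representations of $F=\mathcal{F}f=b\kappa F$ over a fixed generating cycle $\gamma$ --- one as $\int e^{i\xi x}f(\xi)\,d\xi$, the other, after the change of variables $\xi\mapsto(\tau,\Xi)$, as $\tfrac{1}{c(\gamma)}\int_\gamma\int e^{i\tau\Xi x}\psi_\Xi(\tau)\,(\cdots)$ --- and concludes by uniqueness of the Fourier representation that the integrands coincide almost everywhere, i.e.\ $f(\tau\Xi)=\psi_\Xi(\tau)$ for $\Xi\in\gamma$; this simultaneously gives $\mathrm{supp}\,f\subset\overline{V}$ and $\mathcal{L}f(\Xi,\cdot)=F(\Xi,\cdot)$ on $\gamma$, and varying the cycle covers all of $\Xi(V)$. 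Note that this is precisely the mechanism of your step (a): the delta-function collapse that you use to locate $\mathrm{supp}(\iota F)$ already identifies the density of $\iota F$ at $\xi=\tau\Xi$ with $\psi_\Xi(\tau)$, so you have the gluing in hand and do not need a separate argument from the John system for step (b). Assembling your (a) this way would reproduce the paper's proof; as written, your (b) leaves the crux as an expectation. (To be fair, the paper's own treatment of this point is itself only a sketch.)
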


\begin{proof}
(i)
Let $F = \mathcal{F} f$.
We see from (\ref{eqn:RJ-Fourier}) that
\begin{align*}
 \mathcal{K}_{v(\Xi)} (\mathcal{J} F)(\Xi,p)
 &= (2\pi)^{q}  \int_{\R^{\ell}_{\tau}} e^{i \tau p} \chi_{v(\Xi)}(\tau \Xi)f(\tau \Xi) \,d\tau_1\wedge  \dots \wedge d\tau_{\ell}\\
 &= (2\pi)^{q}  \int_{ \{\tau\,;\, \tau \Xi \, \in v(\Xi)\} } e^{i \tau p} f(\tau \Xi)\,d\tau_1\wedge  \dots \wedge d\tau_{\ell}.
\end{align*}
Therefore, in view of (\ref{eqn:def_of_CST})
 we obtain $\mathcal{K}_T F(\Xi,z) = \mathcal{L}f(\Xi, z)$.\\
(ii) For any $\psi(\Xi, z)\in \mathcal{S}(T)$,
 we see that $F=b(\kappa\psi) \in \mathcal{S}(\R^n_x)$ is well-defined. 
Let $f=\mathcal{F}^{-1}F$. 
We need to prove that $\mathcal{L}f=\psi$. 
Let us fix a generating cycle $\gamma$ and compare the representations of $F$ as Fourier transform and through $\psi$ for this cycle.
Then one can be obtained from another by the change of variables $\xi \to (\Xi,\tau)$ and the integrands coincide almost everywhere. 
Therefore we have $\psi=\mathcal{L}f$ on $\gamma$ and we can do it for any cycle.
\end{proof}

Now Theorem~\ref{thm:Image_of_cohL} follows from Theorem~\ref{thm:Szegoe}.
Let us draw a diagram for all spaces and operators introduced in this section:
\begin{equation} \label{eqn:big_diagram}
\xymatrix{ & & \mathcal{S}(T) \ar[ld]^{\kappa} & \\
\mathcal{S}(V) \ar[rru]^{\mathcal{L}} \ar[r]^{\tilde{\mathcal{L}}} \ar[rrd]_{\mathcal{F}} &
\tilde{\mathcal{S}}(T) \ar[rd]^b & & \mbox{(Image of Radon-John transform)} \ar[ul]_{\mathcal{K}_{v(\Xi)}} \\
& & \mathcal{S}(\R^n_x) \ar[uu]^{\mathcal{K}_T} \ar[ru]_{\mathcal{J}} & }
\end{equation}

\begin{remark}
Using the similar construction, we can define the cohomological space $\mathcal{S}'(T)$
 and the operator of boundary values $b: \mathcal{S}'(T) \to \mathcal{S}'(\R^n_x)$. 
However we cannot add it by a cohomological Laplace transform
 since for this we need to restrict distributions on slices or define Radon-John transform on $\mathcal{S}'$, which requests special justifications.
\end{remark}


\section{Cohomological Hardy space at non-convex tube $T$}
In Section 4,
 we have defined the isomorphic spaces of cohomology, $\mathcal{S}(T)$ and $\tilde{\mathcal{S}}(T)$,
 which are isomorphic images of $\mathcal{S}(V)$ by the cohomological Laplace transforms.
In this section,
 we want to define 
 cohomological analogue of Hardy norm on $\mathcal{S}(T)$, 
 and give $L^2$-version of the cohomological Laplace transform.
 
Similarly to $\pi_{\Xi}^* \mathcal{S}(t(\Xi))$ for $\Xi \in \Xi(V)$,
 let $\pi_{\Xi}^* \mathrm{H}(t(\Xi))$ denote
 the function space $\set{\phi \circ \pi_{\Xi}}{\phi \in \mathrm{H}(t(\Xi))}$ over $T(\Xi)$.
The inner product on $\pi_{\Xi}^* \mathrm{H}(t(\Xi))$ is defined by 
\begin{align*}
{}& \bigl( \psi_1,\, \psi_2 \bigr)_{\pi_{\Xi}^* \mathrm{H}(t(\Xi))} := \bigl( \phi_1,\, \phi_2 \bigr)_{\mathrm{H}(t(\Xi))}\\
 &\qquad (\psi_j := \phi_j \circ \pi_{\Xi},\,\, \phi_j \in \mathrm{H}(t(\Xi)),\,\,j=1,2).
\end{align*}
Let $F(\Xi,z)$ be a function belonging to $\mathcal{S}(T)$.
Noting that 
 $F(\Xi,\,\cdot\,) \in \pi_{\Xi}^* \mathcal{S}(t(\Xi)) \subset \pi_{\Xi}^*\mathrm{H}(t(\Xi))$,
 we define a $q$-form $|F|^2(\Xi, d\Xi)$ on the parameter set $\Xi(V)$ by
\begin{equation} \label{eqn:def_of_normform}
 |F|^2(\Xi, d\Xi) 
 := \bigl( F(\Xi,\,\cdot)\, , \, \kappa F(\cdot\,;\Xi, d\Xi) \bigr)_{\pi^*_{\Xi}\mathrm{H}(t(\Xi))}\\
\qquad (\Xi \in \Xi(V)),
\end{equation}
which we call \textit{the prenorm-form of} $F$.


\begin{lemma} \label{lemma:Hardynorm}
For $F \in \mathcal{S}(T)$,
 the prenorm-form $|F|^2(\Xi,d\Xi)$ is a closed form on $\Xi(V)$, 
 and the quantity
$\Vert F \Vert^2 := \frac{1}{c(\gamma)} \int_{\gamma} |F|^2(\Xi,d\Xi)$
 is independent of a choice of a generating cycle $\gamma$.
\end{lemma}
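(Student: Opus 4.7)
My plan is to identify the prenorm-form $|F|^2(\Xi,d\Xi)$ with the direct image of a closed top-degree form on $\R^n_\xi$, exactly in parallel with the computation (\ref{eqn:Lf-det}) that produced the form-transform, and then to apply the defining property of a generating cycle just as in the proof of Theorem~\ref{thm:formL-Fourier}. By Theorem~\ref{thm:Image_of_cohL} I may write $F=\mathcal{L}f$ for a unique $f\in\mathcal{S}(V)$, and set $\phi(\Xi,p):=\mathcal{L}_p f(\Xi,p)$, so that $F(\Xi,z)=\phi(\Xi,\Xi z)$ and $\kappa F$ is given by the finite sum (\ref{eqn:def_of_kappa}).

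The first step is to observe that $\bigl(\tfrac{1}{i}\tfrac{\partial}{\partial p}\bigr)^m\phi(\Xi,\cdot)$ is the small Laplace transform of $\tau\mapsto\tau^m f(\tau\Xi)$. Applying the Parseval identity for the Hardy space (the $L^2$ case of Theorem~\ref{thm:Paley-Wiener}) term by term in the $\lambda_m$-expansion of $\kappa F$, each Hardy inner product turns into an ordinary $L^2$-integral on the slice:
$$
|F|^2(\Xi,d\Xi) \;=\; (2\pi)^\ell \sum_{|m|=q}\!\Bigl(\int_{v(\Xi)}|f(\tau\Xi)|^2 \tau^m\,d\tau_1\wedge\dots\wedge d\tau_\ell\Bigr)\lambda_m(\Xi,d\Xi).
$$
Reading the combinatorial identity (\ref{eqn:lambda_m}) from right to left is precisely the manipulation that produced (\ref{eqn:Lf-det}); it converts the above sum into the fiber integral
$$
|F|^2(\Xi,d\Xi) \;=\; (2\pi)^\ell \int_{\{\tau\,;\,\tau\Xi\in v(\Xi)\}} \Phi^*\bigl(|f(\xi)|^2\,d\xi_1\wedge\dots\wedge d\xi_n\bigr).
$$

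Closedness is then immediate, since $|f(\xi)|^2\,d\xi_1\wedge\dots\wedge d\xi_n$ is a top-degree form on $\R^n_\xi$, hence closed, and its direct image under $\Phi$ is a closed form on $\Xi(V)$; this is the same argument that proved Proposition~\ref{prop:cosed_formLf}. For the cycle-independence statement I substitute $\eta(\xi,d\xi)=|f(\xi)|^2\,d\xi_1\wedge\dots\wedge d\xi_n$ into the generating-cycle identity (\ref{eqn:pullback}) to obtain
$$
\frac{1}{c(\gamma)}\int_\gamma |F|^2(\Xi,d\Xi) \;=\; (2\pi)^\ell \int_V |f(\xi)|^2\,d\xi_1\wedge\dots\wedge d\xi_n \;=\; (2\pi)^\ell\Vert f\Vert^2_{L^2(V)},
$$
whose right-hand side is manifestly independent of $\gamma$. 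As a bonus this computes $\Vert F\Vert^2=(2\pi)^\ell\Vert f\Vert^2_{L^2(V)}$, which will be the natural input for the $L^2$-Paley-Wiener theorem in the sequel.

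The only delicate point I foresee is the Parseval step: one must justify the identity $(\mathcal{L}g_1,\mathcal{L}g_2)_{\mathrm{H}(t(\Xi))}=(2\pi)^\ell(g_1,g_2)_{L^2(v(\Xi))}$ applied with $g_1(\tau)=f(\tau\Xi)$ and $g_2(\tau)=\tau^m f(\tau\Xi)$, which is routine since $f\in\mathcal{S}(V)$ forces $\tau^m f(\tau\Xi)\in L^2(v(\Xi))$ uniformly in $\Xi$, but it does require careful tracking of the normalization constant implicit in (\ref{eqn:def_of_Laplace})--(\ref{eqn:def_of_Hardynorm}). Once this bookkeeping is in place, the rest is a verbatim transcription of the Proposition~\ref{prop:cosed_formLf}/Theorem~\ref{thm:formL-Fourier} template with $|f|^2$ replacing $e^{i\xi z}f$.
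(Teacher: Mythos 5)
Your proposal is correct and follows essentially the same route as the paper's own proof: reduce to $F=\mathcal{L}f$ via Theorem~\ref{thm:Image_of_cohL}, apply Plancherel to convert each Hardy inner product into $(2\pi)^{\ell}\int |f(\tau\Xi)|^2\tau^m\,d\tau$, reassemble the $\lambda_m$-sum into the fiber integral of $\Phi^*(|f(\xi)|^2\,d\xi_1\wedge\dots\wedge d\xi_n)$, and conclude closedness from the direct-image argument and cycle-independence from (\ref{eqn:pullback}). The isometry $\Vert F\Vert^2=(2\pi)^{\ell}\Vert f\Vert^2_{L^2(V)}$ you note as a bonus is exactly the paper's equation (\ref{eqn:isometry}) feeding into Theorem~\ref{thm:isometry}.
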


\begin{proof}
Thanks to Theorem~\ref{thm:Image_of_cohL},
 we can take $f \in \mathcal{S}(V)$ for which $F = \mathcal{F} f$. 
By the Plancherel formula,
 we have
{\begin{align*}
{}&\Bigl( \mathcal{L}_p f(\Xi,\cdot),\,
           \bigl(\frac{1}{i} \frac{\partial}{\partial p} \bigr)^m \mathcal{L}_p f(\Xi,\,\cdot) \Bigr)_{\mathrm{H}(t(\Xi))}\\
&= (2\pi)^{\ell} \int_{ \{\tau\,;\, \tau \Xi \, \in v(\Xi)\} } |f(\tau \Xi)|^2 \tau^m \, d\tau_1 \wedge \dots \wedge d\tau_{\ell}. 
\end{align*}
}Thus, by the argument similar to (\ref{eqn:Lf-det}),
 we get
\begin{align*}
 {}& |F|^2(\Xi,d\Xi)\\
 &= \sum_{|m|=q} 
   (2\pi)^{\ell} \Bigl( \int_{ \{\tau\,;\, \tau \Xi \, \in v(\Xi)\} } |f(\tau \Xi)|^2 \tau^m \, d\tau_1 \wedge \dots \wedge d\tau_{\ell} \Bigr)
   \lambda_m(\Xi; d\Xi)\\
 &= \frac{(2\pi)^{\ell} }{\ell! (n-\ell)!} 
     \int_{ \{\tau\,;\, \tau \Xi \, \in v(\Xi)\} } |f(\tau \Xi)|^2\,d\tau_1 \wedge \dots \wedge d\tau_{\ell}
     \wedge \det\begin{bmatrix}\Xi \\ (\sum_{j=1}^{\ell} \tau_j d\xi^j)^{ \{ q \} } \end{bmatrix} \\
 &=  (2\pi)^{\ell} \int_{ \{\tau\,;\, \tau \Xi \, \in v(\Xi)\} } \Phi^*( |f(\xi)|^2 d\xi_1 \wedge \dots \wedge d\xi_n).
\end{align*}
Therefore $|F|^2(\Xi,d\Xi)$
 is the direct image of the closed $n$-form $\Phi^*( |f(\xi)|^2 d\xi_1 \wedge \dots \wedge d\xi_n)$,
 so that $|F|^2(\Xi,d\Xi)$ is closed.
On the other hand, by (\ref{eqn:pullback}) we have
\begin{equation} \label{eqn:isometry}
\begin{aligned}
\Vert F \Vert^2 &= \frac{(2\pi)^{\ell}}{c(\gamma)}\int_{\gamma} \int_{ \{\tau\,;\, \tau \Xi \, \in v(\Xi)\} } \Phi^*( |f(\xi)|^2 d\xi_1 \wedge \dots \wedge d\xi_n)\\
 &= (2\pi)^{\ell} \int_V |f(\xi)|^2 \,d\xi_1 \wedge \dots \wedge d\xi_{n}.
\end{aligned}
\end{equation}
Therefore $\Vert F \Vert^2$ is independent of the choice of a generating cycle $\gamma$.
\end{proof}

 
Let us call $\Vert F \Vert$ \textit{the cohomological Hardy norm} on $\mathcal{S}(T)$.
Thanks to (\ref{eqn:isometry}), we obtain the following.


\begin{theorem} \label{thm:isometry}
One has an isometric isomorphism
 $$\mathcal{S}(V) \owns f \mapsto (2\pi)^{-\ell/2} \mathcal{L}f \in \mathcal{S}(T)$$
 with respect to the $L^2$-norm
 and the cohomological Hardy norm.
\end{theorem}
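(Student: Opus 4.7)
The proof is essentially a combination of two facts that the paper has already established: the bijectivity statement of Theorem~\ref{thm:Image_of_cohL} and the explicit norm identity (\ref{eqn:isometry}) that appears inside the proof of Lemma~\ref{lemma:Hardynorm}. My plan is to simply assemble these.

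First, I would invoke Theorem~\ref{thm:Image_of_cohL}, which tells us that $\mathcal{L}:\mathcal{S}(V)\to\mathcal{S}(T)$ is a linear bijection. Multiplication by the nonzero scalar $(2\pi)^{-\ell/2}$ preserves bijectivity, so the map $f\mapsto (2\pi)^{-\ell/2}\mathcal{L}f$ is a linear isomorphism of $\mathcal{S}(V)$ onto $\mathcal{S}(T)$. This takes care of the bijectivity half of the claim, and no new work is required.

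For the isometric property, I would take any $f\in\mathcal{S}(V)$ and set $F:=\mathcal{L}f\in\mathcal{S}(T)$. The calculation carried out during the proof of Lemma~\ref{lemma:Hardynorm} shows, via the Plancherel formula applied slice-by-slice and the identification of the prenorm-form with the direct image of $\Phi^{*}(|f(\xi)|^{2}\,d\xi_{1}\wedge\dots\wedge d\xi_{n})$, that the cohomological Hardy norm satisfies
\[
 \Vert F\Vert^{2} \;=\; (2\pi)^{\ell}\int_{V}|f(\xi)|^{2}\,d\xi_{1}\wedge\dots\wedge d\xi_{n} \;=\; (2\pi)^{\ell}\,\Vert f\Vert_{L^{2}(V)}^{2},
\]
which is precisely equation (\ref{eqn:isometry}). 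Dividing by $(2\pi)^{\ell/2}$ gives $\Vert (2\pi)^{-\ell/2}\mathcal{L}f\Vert = \Vert f\Vert_{L^{2}(V)}$, as desired.

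There is no genuine obstacle to address here: the difficult steps---showing that the prenorm-form is closed (so that the integral over a generating cycle is well-defined and cycle-independent), and reducing the resulting integral to the $L^{2}$-norm of $f$ over $V$ via the defining property (\ref{eqn:pullback}) of the integer $c(\gamma)$---have already been dispatched in Lemma~\ref{lemma:Hardynorm}. The one minor point worth flagging in the write-up is that Theorem~\ref{thm:Image_of_cohL} guarantees surjectivity onto $\mathcal{S}(T)$, so that the isometry is in fact an isometric \emph{isomorphism} rather than only an isometric embedding; without that surjectivity, equation (\ref{eqn:isometry}) alone would give only that $\mathcal{L}$ is an isometry on its image.
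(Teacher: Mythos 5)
Your proposal is correct and matches the paper's own argument: the paper proves this theorem with the single remark ``Thanks to (\ref{eqn:isometry}), we obtain the following,'' relying exactly on the norm identity established inside the proof of Lemma~\ref{lemma:Hardynorm} together with the bijectivity from Theorem~\ref{thm:Image_of_cohL}. Your more explicit assembly of these two ingredients, including the observation that surjectivity is what upgrades the isometry to an isometric isomorphism, is a faithful (and slightly more careful) rendering of the same proof.
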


 
Let $\mathrm{H}(T)$ be the completion of $\mathcal{S}(T)$ with respect to the cohomological Hardy norm.
We call $\mathrm{H}(T)$ \textit{the cohomological Hardy space} over $T$.
Theorem~\ref{thm:isometry} together with the diagram (\ref{eqn:big_diagram}) tells us that
 the map $b \circ \kappa : \mathcal{S}(T) \to \mathcal{S}(\R^n_x)$ and the Cauchy-Szeg\"o projection
 $\mathcal{K}_T : \mathcal{S}(\R^n_x) \to \mathcal{S}(T)$
 are isometries up to scalar multiple.
These operators are extended continuously to the boundary value map $\mathrm{H}(T) \to L^2(\R^n_x)$ and the $L^2$ Cauchy-Szeg\"o projection
 $L^2(\R^n_x) \to \mathrm{H}(T)$ respectively.
They are isometric up to scalar multiple again, and the $L^2$ Cauchy-Szeg\"o projection is a left inverse of the boundary value map.


\end{document}